\documentclass[11pt]{article}

\usepackage[utf8]{inputenc}
\usepackage{etoolbox}
\usepackage{amssymb}
\usepackage{amsmath}
\usepackage{amsthm}
\usepackage{amscd}
\usepackage{amsfonts}
\usepackage{mathrsfs}
\usepackage{mathtools}
\usepackage{mathabx}
\usepackage{enumerate}
\usepackage{hyperref}
\usepackage[style=long]{glossaries-extra}
\usepackage{pst-node}
\usepackage{tikz-cd}
\usepackage{tikz}
\usepackage{caption}
\usepackage{subcaption}
\usepackage{float}
\usetikzlibrary{calc}
\usepackage{fancyhdr}
\usepackage{etoolbox}
\usepackage{pdfpages}
\usetikzlibrary{arrows}
\usepackage{nicefrac}
\usepackage{calc}
\usepackage{pst-solides3d}
\usepackage{pgfplots}
\usepgfplotslibrary{patchplots}
\usetikzlibrary{patterns, positioning, arrows}
\usetikzlibrary{fadings}
\usepackage[utf8]{inputenc}
\usepackage[T1]{fontenc}
\usepackage{lmodern}
\usepackage{comment}
\usepackage{enumitem}
\usepackage{authblk}
\usepackage{tikz-cd}
\usetikzlibrary{arrows}
\usepackage{mathtools}
\usepackage[all]{xy}
\usepackage[textsize=footnotesize,textwidth=4cm,backgroundcolor=orange!20,linecolor=orange,bordercolor=orange]{todonotes}

\theoremstyle{plain}
\newtheorem{theo}{Theorem}[section]
\newtheorem{prop}[theo]{Proposition}
\newtheorem{lem}[theo]{Lemma}
\newtheorem{coro}[theo]{Corollary}
\newtheorem*{conj}{Conjecture}

\theoremstyle{definition}
\newtheorem{df}[theo]{Definition}

\newtheorem{rmk}[theo]{Remark}

\newcommand{\Z}{\mathbb{Z}}

\newcommand{\calC}{\mathcal{C}}

\newcommand{\F}{\mathbb{F}}
\newcommand{\calG}{\mathcal{G}}
\newcommand{\calH}{\mathcal{H}}

\newcommand{\calU}{\mathcal{U}}

\newcommand{\cic}[1]{\left\langle #1 \right\rangle}
\newcommand{\til}[1]{\tilde{#1}}

\newcommand{\bl}{\backslash}

\DeclareMathOperator{\im}{im}
\DeclareMathOperator{\HNN}{HNN}

\providecommand{\keywords}[1]
{\small	\textbf{Keywords:} #1.}

\providecommand{\MSC}[1]
{\small	\textbf{MSC Classification:} #1.}

\makeatletter
\newcommand\blfootnote{\gdef\@thefnmark{}\@footnotetext}
\makeatother

\definecolor{mygreen}{rgb}{0, 0.3, 0}
\definecolor{myred}{rgb}{0.4,0,0}
\definecolor{myblue}{rgb}{0.1,0.1,0.6}

\hypersetup{
     colorlinks=true,
     linkcolor=mygreen,
     filecolor=myblue,
     citecolor = myred,      
     urlcolor=myblue,
}

\title{Pro-$\calC$ groups acting on profinite trees\blfootnote{Both authors were supported by CNPq.}}

\author[1]{\href{https://lcorrealopes.github.io/home/}{Lucas C. Lopes}}

\author[2]{\href{https://mat.unb.br/\~pz/index.html}{Pavel A. Zalesskii}}

\affil[1]{\begin{minipage}[t]{\textwidth}Universidade Federal de Minas Gerais, Departamento de Matemática, 31270-901 Belo Horizonte, Brazil, email address: \href{mailto:lucasclopes@ufmg.br}{lucasclopes@ufmg.br}.\end{minipage}}

\affil[2]{\begin{minipage}[t]{\textwidth}{Universidade de Brasília, Departamento de Matemática, 70910-900 Brasília, Brazil, email address: \href{mailto:pz@mat.unb.br}{pz@mat.unb.br}}.\end{minipage}}

\begin{document}

\maketitle

\begin{abstract}
We provide necessary conditions for pro-$\calC$ subgroups to embed into free profinite products, where $\calC$ is a variety of finite groups that does not contain all finite groups. Under suitable hypotheses, we extend this result to profinite groups acting $k$-acylindrically on profinite trees. Finally, we show that these results can be applied to important classes of profinite groups.
\end{abstract}

\MSC{20E08, 20E18, 20F64}

\keywords{free profinite products, fundamental groups, acylindrical actions, geometric group theory}

\section{Introduction}\label{sec0}

The Kurosh subgroup theorem describing the algebraic structure of subgroups of free products is one of the fundamental results of combinatorial group theory. In the profinite setting, however, the direct analogue of this theorem does not hold for free products of profinite groups. Consequently, the structure of closed subgroups of free profinite products is considerably more subtle and remains far from being fully understood.

The study of prosoluble subgroups of free profinite products was initiated in the middle $90$s by M. Jarden and F. Pop (see \cite{Jar94,Pop95}). More recently, the second author gave a description of finitely generated and second countable prosoluble subgroups of free profinite products, extending the main theorem of \cite{Pop95}. If $\calC$ is a variety of finite groups containing a non-soluble finite group and one considers pro-$\calC$ subgroups of free profinite products, the problem becomes significantly harder. Indeed, several of the main tools useful in the prosoluble case, such as cohomological criteria for relative projectivity, are no longer available. The main objective of this paper is to extend the study to  pro-$\calC$ subgroups, where $\calC$ is the class of finite groups closed under taking subgroups, quotient and extensions.

\begin{theo}\label{theo:kuroshtypedecomposition}
Let $G = \coprod_{i \in I} G_i$ be a free profinite product of profinite groups $G_i$. Let  $\calC$ be a class of finite groups closed under taking subgroups, quotient and extensions that does not contain all finite groups and $H$ a pro-$\calC$ subgroup of $G$. Then one of the following holds:
\begin{enumerate}[label = (\roman*)]
\item all non-trivial $H \cap G_i^g$ are pro-$\pi$ for $g \in G$, $i \in I$ where $\pi$ is a finite set of primes. Moreover, if $H$ is finitely generated then $ H \cap G_i^g$ are finitely generated and the number of non conjugate and non-trivial such intersections is bounded.

\item $H \leq G_i^g$ for some $g \in G$, $i \in I$.
\end{enumerate}
\end{theo}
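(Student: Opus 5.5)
We will run the argument through the action of $G$ on its standard profinite tree $T$, whose vertex set is $\bigsqcup_i G/G_i \sqcup G/\{1\}$ (or, equivalently, the tree of the free product over the trivial edge group), so that vertex stabilizers are the conjugates $G_i^g$ and edge stabilizers are trivial. The subgroup $H$ acts on $T$ with trivial edge stabilizers, and the intersections $H\cap G_i^g$ are precisely the vertex stabilizers of this action. The proof splits according to whether $H$ fixes a vertex. If it does, we are in case (ii). So assume from now on that $H$ fixes no vertex. The aim is then to show that the nontrivial vertex stabilizers are pro-$\pi$ for a finite set of primes $\pi$.

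\textbf{Step 1: dihedral-type subgroups.} Suppose $A=H\cap G_i^{g}$ and $B=H\cap G_j^{h}$ are nontrivial and stabilize distinct vertices $v\neq w$. Because edge stabilizers are trivial, standard profinite Bass--Serre arguments give $\langle A,B\rangle = A\amalg B$, a free profinite product. This is the analogue of the fact that stabilizers of distinct vertices generate their free product, and it follows by restricting the action to the minimal subtree spanned by $v$ and $w$. Since $\langle A,B\rangle$ is a closed subgroup of $H$, it is pro-$\calC$.

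\textbf{Step 2: free products bound the primes.} This is the key step. Let $p$ divide $|A|$ and $q$ divide $|B|$. Then $\Z/p \amalg \Z/q$ is a subgroup of $A\amalg B$, and hence is pro-$\calC$. Every finite quotient of $\Z/p\amalg \Z/q$ lies in $\calC$, and these quotients include all finite groups generated by an element of order $p$ and an element of order $q$. The plan is to use this to show that $\calC$ contains all finite groups, which contradicts the hypothesis. The route I would try first goes through wreath products. Closure of $\calC$ under extensions and subgroups means that once $\Z/p,\Z/q\in\calC$ and enough "mixing" groups such as $\mathrm{PSL}_2(r)$ lie in $\calC$ (they are $(p,q)$-generated for suitable $p,q$, or one uses $(2,3)$-generation after first forcing $2,3$ into the primes of $\calC$), we get every prime in $\calC$. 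Once every cyclic group of prime order is in $\calC$, closure under extensions puts all soluble groups in $\calC$, but nonsoluble groups need more. The cleaner route is to find a nonabelian simple group generated by elements of orders $p$ and $q$ and use that a variety containing "too many" simple groups is everything. I expect this to be the main obstacle.

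The bound actually needed is weaker: the set of primes dividing orders of nontrivial vertex stabilizers must be finite. I would aim to show that if $\pi(A)\cup\pi(B)$ contained two primes $p,q$ (with $p=q$ allowed, using $\Z/p\amalg\Z/p$), then every finite group that is a quotient of a free product of such cyclic groups lies in $\calC$. The theorem that nonabelian finite simple groups are generated by two suitable elements, combined with the fact that every finite group embeds in a simple group generated by elements of orders $p$ and $q$ (for instance large alternating groups are $(p,q)$-generated), would give that $\calC$ contains all finite groups. This contradiction would show that in fact at most one of $A,B$ can be nontrivial whenever $A$ and $B$ fix distinct vertices. In that case all nontrivial vertex stabilizers fix a common vertex, $H$ is generated by a vertex stabilizer and a free part, and case (i) holds with $\pi$ the primes of that single stabilizer (if $H$ is not itself in a vertex stabilizer).

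\textbf{Step 3: finite generation.} When $H$ is finitely generated, the profinite Kurosh-type decomposition over the (second countable, after reduction) action gives $H=\coprod_{v} (H\cap G_{i}^{g_v})\amalg F$, with finitely many factors up to conjugacy. Projection arguments then give that the factors are finitely generated and that their number is at most $d(H)$.
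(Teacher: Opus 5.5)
\textbf{Gap: Step 1 is false for profinite groups.} Your Step 2 depends on it. Trivial edge stabilizers do \emph{not} imply that the stabilizers of two distinct vertices generate their free profinite product.

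Take $G=A\amalg B$ with $A\cong B\cong C_3$. A Sylow pro-$3$ subgroup $P\geq A$ also contains some conjugate $B^h$, since Sylow subgroups are conjugate. The subgroups $A$ and $B^h$ fix distinct vertices of the standard tree, yet $\langle A,B^h\rangle\leq P$ is pro-$3$. On the other hand, $C_3\amalg C_3$ maps onto $A_4$, so $\langle A,B^h\rangle$ is not $A\amalg B^h$. A more explicit example: in $C_2\amalg C_2\cong\widehat{\Z}\rtimes C_2$, a factor and its conjugate by a generator of the $\Z_3$-component of $\widehat{\Z}$ generate $\Z_3\rtimes C_2$.

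The same example shows that the conclusion you aim for in Step 2 is wrong. Since $H_{hv}=H_v^h$, ruling out two nontrivial stabilizers at distinct vertices would force every pro-$\calC$ subgroup fixing no vertex to act freely. But if $\calC$ is the class of finite $3$-groups, then $P$ is pro-$\calC$, fixes no vertex, and has nontrivial stabilizers. So the difficulty is not finding simple groups generated by elements of orders $p$ and $q$. It is that a single pair of primes can never give a contradiction, and (i) genuinely allows many nontrivial stabilizers. A smaller slip: $p\mid|A|$ does not give an element of order $p$, e.g.\ when $A\cong\Z_p$.

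\textbf{How the paper avoids this.} Proposition~\ref{prop:nonconjugated} never claims $\langle A,B\rangle=A\amalg B$. It replaces the factors by finite quotients $\bar G_i,\bar G_j$ in which the two stabilizers contain elements of orders $p$ and $q$. It then takes the open subgroup $U$ of $\bar G_i\amalg\bar G_j$ that is the preimage of $C_p\times C_q\leq\bar G_i\times\bar G_j$. The Kurosh theorem for this \emph{open} subgroup gives an epimorphism $w\colon U\to C_p\amalg C_q$. However, $w(H\cap U)$ is only known to contain \emph{some conjugates} of the generators of the two factors. This is exactly why Jarden's invariable generation (Lemma~\ref{invgen}) is needed: it gives an epimorphism $C_p\amalg C_q\to S_q$ or $A_q$ that stays surjective on $w(H\cap U)$. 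Each pair $(p,q)$ puts only one group $S_q$ or $A_q$ into $\calC$. Infinitely many primes $q$ are therefore required to get all finite groups, which is why the conclusion is ``pro-$\pi$ with $\pi$ finite''.

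\textbf{Step 3 is also unsupported.} A Kurosh decomposition of a closed subgroup with finitely many factors up to conjugacy is precisely what has to be proved. Projecting to direct products cannot count the factors; for example, $C_2\times C_3\times C_5$ is cyclic. The paper instead uses that $H$ is $\calC$-projective relative to its intersections with conjugates of the $G_i$ (Proposition~\ref{subgroups}). It combines this with the bound $\sum_t d(H_t)\leq\alpha(H)d(H)$ of Proposition~\ref{relproj}, which needs the finiteness of $\pi$ to be established first.
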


As the profinite fundamental group of a finite graph of profinite groups with finite edge groups is virtually a free profinite product (see \cite{LZ24}), we can deduce the following:

\begin{coro}\label{theo:finite-edge-groups}
Let $(\calG,\Gamma)$ be an injective finite graph of profinite groups with finite edge groups, $G$ its profinite fundamental group. Suppose $\calC$ does not contain all finite groups. If $H$ is a pro-$\calC$ subgroup of $G$, then one of the following holds:
\begin{enumerate}[label = (\roman*)]
\item all non-trivial $H \cap G(v)^g$ are pro-$\pi$ for $g \in G$, where $\pi$ is a finite set of primes and $v \in V(\Gamma)$.

\item $H \leq G(v)^g$ for some $g \in G$ and $v \in V(\Gamma)$.
\end{enumerate}
\end{coro}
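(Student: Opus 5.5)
We deduce Corollary~\ref{theo:finite-edge-groups} from Theorem~\ref{theo:kuroshtypedecomposition} by passing to an open normal subgroup of $G$ that is a free profinite product, as provided by \cite{LZ24}.

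\textbf{Step 1: the open subgroup.} Since $\Gamma$ is finite and the edge groups are finite, \cite{LZ24} gives an open normal subgroup $U \trianglelefteq G$ with
\[
U = \coprod_{j\in J} U_j \amalg F,
\]
where $F$ is free profinite and each $U_j$ is of the form $U\cap G(v)^g$ for some $v \in V(\Gamma)$ and $g\in G$. Moreover, every $U\cap G(v)^g$ is conjugate in $U$ into some $U_j$; this follows from the action of $U$ on the standard profinite tree, since $U$ meets the finite edge stabilizers trivially. We may also treat $F$ as a free product of copies of $\widehat{\Z}$ and absorb it into the factors.

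\textbf{Step 2: applying the theorem.} Put $H_0 = H\cap U$. This is an open, hence pro-$\calC$, subgroup of $H$ sitting inside the free profinite product $U$. By Theorem~\ref{theo:kuroshtypedecomposition} applied to $H_0 \le U$, one of two things happens.
\begin{itemize}
\item[(a)] All nontrivial $H_0\cap U_j^u$ are pro-$\pi$ for a finite set of primes $\pi$.
\item[(b)] $H_0\le U_j^u$ for some $j$ and $u\in U$.
\end{itemize}

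\textbf{Case (a).} Let $\pi'$ be $\pi$ together with the prime divisors of $[G:U]$. Take a nontrivial $H\cap G(v)^g$. Then $H_0\cap G(v)^g = H\cap (U\cap G(v)^g)$, which lies in $H_0\cap U_j^{u}$ for suitable $j$ and $u$. So $H_0\cap G(v)^g$ is pro-$\pi$ (or trivial). It has index dividing $[G:U]$ in $H\cap G(v)^g$, so $H\cap G(v)^g$ is pro-$\pi'$. If the copies of $\widehat{\Z}$ coming from $F$ appear as factors, one must make sure they do not spoil the conclusion. This is fine, since they only contribute intersections with $F$-conjugates, which are irrelevant to vertex groups.

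\textbf{Case (b).} Here $H_0$ fixes a vertex $\tilde v$ of the standard tree $T$, namely the one whose stabilizer is $G(v)^g \supseteq U_j^u$.
\begin{itemize}
\item If $H_0$ is trivial, then $H$ is finite, and a finite group acting on a profinite tree fixes a vertex. Hence $H\le G(v)^g$, which is (ii).
\item Otherwise the fixed set $T^{H_0}$ is a nonempty profinite subtree, and it is $H$-invariant because $H_0\trianglelefteq H$. The finite group $H/H_0$ acts on $T^{H_0}$, so it fixes a vertex there. That vertex is fixed by $H$, which again gives (ii).
\end{itemize}

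\textbf{Main obstacle.} The delicate point is Step 1. One needs the precise form of the \cite{LZ24} decomposition, in particular that the free factors of $U$ are exactly the intersections of $U$ with conjugates of vertex groups, up to conjugacy. One also needs the profinite fact that a finite group acting on a profinite tree has a fixed vertex.
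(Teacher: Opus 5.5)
Your route is the same as the paper's. You take an open normal $U$ meeting the edge stabilizers trivially, so that it splits as a free profinite product of the groups $U\cap G(v)^g$ and a free factor $F$. You then apply Theorem~\ref{theo:kuroshtypedecomposition} to $H_0=H\cap U$ and lift pro-$\pi$-ness through the index $[G:U]$. In the fixed-vertex case you let the finite group $H/H_0$ act on the subtree of $H_0$-fixed points. Case (a) and the fixed-vertex argument are fine.

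The gap is in Case (b). You keep $F$ (or its copies of $\widehat{\Z}$) among the free factors, so alternative (ii) of Theorem~\ref{theo:kuroshtypedecomposition} may only give $H_0\le F^u$. In that situation your claim that $H_0$ fixes a vertex is false.

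Here is a concrete instance. Let $\calC$ be the finite soluble groups, and let $\Gamma$ be one vertex with a loop and trivial edge group, so $G=A\amalg\widehat{\Z}$. Take $H$ to be the factor $\widehat{\Z}$ and $U=G$. Alternative (i) fails, since $H_0\cap F=\widehat{\Z}$ is not pro-$\pi$ for any finite $\pi$. So only (ii) holds, and it holds through $F$. But $H$ fixes no vertex, and your Case (b) would output the false conclusion $H\le A^g$.

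The repair is short. Conjugates of distinct free factors of a free profinite product intersect trivially, so $H_0\cap G(v)^g\le F^u\cap U_j^{u'}=1$ for all $v$ and $g$. Hence each $H\cap G(v)^g$ embeds into $H/H_0$, which embeds into $G/U$. It is therefore a $\pi$-group for $\pi$ the set of primes dividing $[G:U]$, which is conclusion (i).

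The paper splits instead on whether $H_0$ fixes a vertex, and it writes $U$ without the free factor. Once that factor is accounted for, it needs this same observation implicitly.
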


Theorem \ref{theo:kuroshtypedecomposition} applies to the profinite completion of the fundamental group $\pi_1(M)$ of a reducible $3$-manifold; namely the Kneser–Milnor decomposition of $\pi_1(M)$ gives the free profinite product decomposition of $\widehat{\pi_1(M)}$ so that Theorem \ref{theo:kuroshtypedecomposition} can be used.

\medskip
Note that many groups of geometric nature (like $3$-manifold groups, virtually special groups etc.) are the fundamental groups of acylindrical graphs of groups (a graph of groups is called $k$-acylindrical if its fundamental group acts on the Bass--Serre tree $k$-acylindrically, i.e., such that the intersection of $k+1$ edge stabilizers is always trivial). This was the motivation for extending the description of  prosoluble groups of free profinite products to the description of prosoluble subgroups of the fundamental groups of $k$-acylindrical graphs of profinite groups \cite{LZ24}. To obtain this characterization the cohomological criterion for relative projectivity from \cite{Zal23} (also proved independently in \cite{Wil25}) was essentially used. In the absence of such criteria for the non-soluble case we had to put some restrictions on the groups.

\begin{theo}\label{theo:pro-pi-stabilizers-profinite-version}
Let $(\calG,\Gamma)$ be a $k$-acylindrical finite graph of profinite groups, $G$ its fundamental group and $T$ its Bass--Serre tree. Suppose every infinite cyclic pro-$p$ group of $G$ has an open subgroup which is a virtual retract of $G$. Let $H$ be a finitely generated pro-$\calC$ subgroup of $G$ where $\calC$ does not contain all finite groups. If $H$ does not fix a vertex of $T$, then all $H$-vertex stabilizers are pro-$\pi$ groups for some finite set of primes $\pi$.
\end{theo}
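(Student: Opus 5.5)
We argue by contradiction. Suppose $H$ fixes no vertex of $T$ but the primes dividing the orders of the $H$-vertex stabilizers $H_v=H\cap G_{\tilde v}$ form an infinite set. The aim is to use the virtual retract hypothesis to pass to a setting where Theorem \ref{theo:kuroshtypedecomposition}, or Corollary \ref{theo:finite-edge-groups}, applies. The guiding fact is that in a $k$-acylindrical action, edge stabilizers along long paths are trivial, so the action of $H$ behaves, up to bounded error, like an action with finite edge stabilizers.

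The first step is to reduce to finitely many conjugacy classes. Because $H$ is finitely generated and acts $k$-acylindrically on $T$, the standard profinite accessibility arguments (as in \cite{LZ24}) show that $H$ is the fundamental group of a finite graph of pro-$\calC$ groups $(\calH,\Delta)$. Its vertex groups are the maximal vertex stabilizers, up to conjugacy, and $(\calH,\Delta)$ is again $k$-acylindrical. So it is enough to show that each of the finitely many vertex groups $H_v$ is pro-$\pi_v$ for a finite set of primes $\pi_v$, and then take $\pi=\bigcup_v\pi_v$.

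The second step is to show that the edge groups of $(\calH,\Delta)$ are finite, or at least pro-$\pi'$ for a finite set $\pi'$. This is where the hypothesis on infinite cyclic pro-$p$ subgroups enters.
\begin{enumerate}[label = (\alph*)]
\item Suppose an edge stabilizer $H_e$ is infinite. Being pro-$\calC$ and profinite, it contains an infinite procyclic subgroup, hence a copy $C$ of $\Z_p$ for some prime $p$.
\item By hypothesis, an open subgroup $C_0\le C$ is a virtual retract of $G$: there is an open $U\le G$ with $C_0\le U$ and a continuous retraction $r\colon U\to C_0$.
\item Set $H_U=H\cap U$. Since $H$ does not fix a vertex, $H_U$ contains a hyperbolic element $h$.
\item By $k$-acylindricity, $C_0$ and $C_0^{h^n}$ fix disjoint regions of the axis of $h$ for $n$ large. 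The aim is to show that then $C_0^{h^n}\cap C_0=1$ while $r(C_0^{h^n})$ must be commensurable with $C_0$. This should let us produce inside $H_U$ a free pro-$\calC$ product $C_0\amalg C_0^{h^n}$, or a quotient $r(H_U)=C_0$ in which a hyperbolic element both normalizes and moves $C_0$.
\end{enumerate}
I expect sub-step (d) to be the main obstacle: extracting a clean contradiction, or a finite-edge-group splitting, from the retraction. What I would try first is to use the retraction to show that infinite edge stabilizers are virtually central in $H_U$, so that $H_U$ would fix a vertex, which is impossible.

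The third step applies once the edge groups of the splitting of an open subgroup $H_U$ of $H$ are finite. Corollary \ref{theo:finite-edge-groups} applied to $H_U$, which is pro-$\calC$, then gives a dichotomy. Alternative (ii) says $H_U$ fixes a vertex, which is excluded because $H_U$ is open in $H$ and $H$ fixes no vertex; here we use that a finitely generated group with a finite-index subgroup fixing a vertex fixes a vertex in an acylindrical setting. So alternative (i) holds, and every vertex group of $H_U$ is pro-$\pi$. Since $[H:H_U]$ is finite, each $H_v$ has $H_v\cap U$ pro-$\pi$ and finite index in $H_v$. Enlarging $\pi$ by the finitely many primes dividing $[H:H_U]$ then shows that all $H$-vertex stabilizers are pro-$\pi$.
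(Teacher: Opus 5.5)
Your plan has a genuine gap, in fact two.

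\textbf{Step 1.} You invoke an accessibility result: that a finitely generated $H$ acting $k$-acylindrically is the fundamental group of a finite graph of pro-$\calC$ groups whose vertex groups are the maximal stabilizers. You do not justify this, and it is not available in this generality. The description in LZ24 concerns prosoluble $H$ and rests on a cohomological criterion for relative projectivity which, as the introduction stresses, is missing in the non-soluble case. In this paper, even the finiteness of conjugacy classes of non-trivial stabilizers (Proposition \ref{relproj}, Corollary \ref{finvs}) is obtained only \emph{after} one knows the stabilizers are pro-$\pi$.

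\textbf{Step 2.} More seriously, the target of Step 2 is false, because infinite edge stabilizers occur under all the hypotheses. Take $\widehat{F(a,b,d)}=\widehat{F(a,b)}\amalg_{\widehat{\langle a\rangle}}\widehat{F(a,d)}$. Here $\widehat{\langle a\rangle}$ is a free factor, hence malnormal, in both vertex groups, so the action is acylindrical; and the group is VRC by Proposition \ref{prop:virtualretractions}. Let $P$ be a $p$-Sylow subgroup containing the $\Z_p$-component $a_p$ of $\widehat{\langle a\rangle}$, and let $y\in P$ map to $bd$ in the pro-$p$ completion. Then $H=\langle a_p,y\rangle$ is a $2$-generated pro-$p$ group. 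The element $y$ fixes no vertex: it survives both maps to $\Z_p$ killing $\{a,b\}$, respectively $\{a,d\}$, and these kill all conjugates of the corresponding vertex group. Yet $a_p\in H$ fixes an edge. So no use of the retraction can make edge stabilizers finite or virtually central.

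Sub-step (d) could not yield a contradiction anyway. $C_0\amalg C_0^{h^n}$ is a perfectly good pro-$\calC$ group when $p\in\pi(\calC)$, and a single prime never forces non-$\calC$ quotients. Sub-step (a) also fails as stated, since an infinite profinite group may be torsion, e.g.\ $\prod_{\mathbb N}C_2$.

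\textbf{The missing idea.} Apply the retraction hypothesis to \emph{vertex} stabilizers, so as to obtain cyclic quotients of two \emph{different} prime orders. Then feed these into Proposition \ref{prop:nonconjugated2} rather than Corollary \ref{theo:finite-edge-groups}; no splitting of $H$ and no control of edge groups is needed. The paper's argument runs as follows.
\begin{enumerate}[label = (\arabic*)]
\item Take $A\cong\Z_p\le H_v$ and $B\cong\Z_q\le H_w$. After shrinking $A$ and $B$, there are retractions $r_p\colon U_p\to A$ and $r_q\colon U_q\to B$ defined on open subgroups of $G$.
\item Replace $H$ by the open subgroup $H\cap U_p\cap U_q$. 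Then $r_p$ and $r_q$ have infinite image on the new $H_v$ and $H_w$, so these stabilizers map onto $C_p$ and $C_q$ respectively.
\item If infinitely many primes occur in $H$-vertex stabilizers, this works for every pair $p\neq q$.
\item Proposition \ref{prop:nonconjugated2} then applies. It combines Lemma \ref{lem:ontocpcq}, the factorization of $U/\widetilde K\to C_p\times C_q$ through $C_p\amalg C_q$, and Jarden's Lemma \ref{invgen}. The result is $S_q$ or $A_q$ as a quotient of a subgroup of $H$ for infinitely many $q$.
\item Hence $\calC$ would contain all finite groups, a contradiction.
\end{enumerate}
This two-prime mechanism is exactly what your plan never reaches.
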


The main application of Theorem \ref{theo:pro-pi-stabilizers-profinite-version} is the profinite completion of a hyperbolic or toral relatively hyperbolic virtually compact special group. Any such group $G$ has a subgroup $G_0$ of finite index that admits a malnormal quasiconvex hierarchy (see \cite{Dan21} and \cite{E}).  Since the edge groups in the hierarchy are malnormal, the corresponding action of $G_0$ on the Bass–Serre tree is $1$-acylindrical. As was proved in \cite{WZ17}, this malnormal hierarchy survives in the profinite completion so that the profinite completion $\widehat{G_0}$ acts $1$-acylindrically on the profinite tree and therefore we can apply Theorem \ref{theo:pro-pi-stabilizers-profinite-version}.

\begin{theo}\label{theo:virtually compact special case}
Let $G_0$ be a compact special group and $\widehat{G_0}$ be the profinite completion of $G_0$ acting acylindrically on a profinite tree $T$ as described above. If $H$ is a pro-$\calC$ subgroup of $\widehat{G_0}$ where $\calC$ does not contain all finite groups, then all $H$-stabilizers are pro-$\pi$ for a finite set of primes $\pi$ whenever $H$ does not fix a vertex.
\end{theo}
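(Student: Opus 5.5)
The plan is to deduce the statement from Theorem \ref{theo:pro-pi-stabilizers-profinite-version}. There are three things to arrange: realize $\widehat{G_0}$ as the fundamental group of a $k$-acylindrical finite graph of profinite groups, verify the virtual retract hypothesis on infinite procyclic pro-$p$ subgroups, and remove the finite generation hypothesis on $H$.

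\textbf{Step 1: the graph of groups.} By \cite{Dan21} and \cite{E}, $G_0$ admits a malnormal quasiconvex hierarchy. By \cite{WZ17}, the first splitting of this hierarchy survives in the profinite completion. Concretely, $\widehat{G_0}$ is the profinite fundamental group of a finite graph of profinite groups whose vertex and edge groups are the closures of the corresponding subgroups of $G_0$. The edge groups stay malnormal in $\widehat{G_0}$, so the action on the profinite Bass--Serre tree $T$ is $1$-acylindrical.

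\textbf{Step 2: virtual retracts.} Let $Z \cong \Z_p$ be an infinite procyclic pro-$p$ subgroup of $\widehat{G_0}$.
\begin{itemize}
\item Compact special groups are virtually retract onto cyclic (indeed quasiconvex) subgroups, by Haglund--Wise. Hence for $g \in G_0$ the closure $\overline{\langle g\rangle}$ is a virtual retract of $\widehat{G_0}$: a retraction $K \to \langle g\rangle$ from a finite-index subgroup $K$ extends to the completions.
\item Since $\overline{\langle g\rangle} \cong \widehat{\Z}$ and $\Z_p$ is a direct factor of $\widehat{\Z}$, the Sylow pro-$p$ part of $\overline{\langle g\rangle}$ is also a virtual retract.
\item The difficulty is that $Z$ need not meet $G_0$; it may come from a profinite element. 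Here I would use that $\widehat{G_0}$ embeds in the profinite completion of a right-angled Artin group in which $\widehat{G_0}$ is a virtual retract. This reduces the question to showing that procyclic pro-$p$ subgroups of $\widehat{A_\Gamma}$ have open subgroups that are virtual retracts.
\item For that I would induct on the number of vertices of $\Gamma$, splitting $\widehat{A_\Gamma}$ as an amalgam over a link and using centralizer descriptions of profinite elements in $\widehat{A_\Gamma}$. Passing to an open subgroup of $Z$ absorbs the finite torsion ambiguity.
\end{itemize}
I expect this step to be the main obstacle.

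\textbf{Step 3: removing finite generation.} Let $H$ be an arbitrary pro-$\calC$ subgroup that fixes no vertex of $T$. Write $H$ as the directed union of its finitely generated closed subgroups $H_\alpha$. The set of vertices fixed by a closed subgroup is a profinite subtree, and a decreasing family of nonempty compact subtrees has nonempty intersection. So if every $H_\alpha$ fixed a vertex, $H$ would too. Hence cofinally many $H_\alpha$ fix no vertex, and Theorem \ref{theo:pro-pi-stabilizers-profinite-version} makes each of their vertex stabilizers pro-$\pi_\alpha$.

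The remaining danger is that $\pi_\alpha$ grows with $\alpha$. To control this I would check that in the proof of Theorem \ref{theo:pro-pi-stabilizers-profinite-version} the set $\pi$ depends only on $\calC$, for instance the primes $p$ for which $\calC$ misses some group built from $p$-groups. Failing that, I would argue directly as in Theorem \ref{theo:kuroshtypedecomposition}, that an $H$-stabilizer containing $p$-elements for infinitely many $p$, together with a hyperbolic element of $H$, generates a free pro-$\calC$ product forcing all finite groups into $\calC$.

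Then $H_v = \overline{\bigcup_\alpha (H_\alpha)_v}$ is pro-$\pi$ for a uniform $\pi$, which gives the statement.
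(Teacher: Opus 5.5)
Your Steps 1 and 2 follow the paper's route. The paper proves this theorem in one line by combining Theorem~\ref{theo:pro-pi-stabilizers-profinite-version} with Corollary~\ref{cor:virtually special}; there, VRC is obtained by embedding $\widehat{G_0}$ in $\widehat{A_\Gamma}$ and inducting along a splitting of $A_\Gamma$ whose vertex groups are retracts (Proposition~\ref{prop:virtualretractions}).

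\textbf{Main gap (Step 2).} The gap is the case you flag yourself. If $Z\cong\Z_p$ fixes a vertex of the Bass--Serre tree $T$ of the splitting, it lies in a conjugate of a retract vertex group, and induction applies. If $Z$ fixes no vertex, however, ``centralizer descriptions of profinite elements'' does not give you any homomorphism injective on $Z$, and without one there is no retraction. The missing idea is to kill the elliptic part. Such a $Z$ acts freely on $T$. As in the proof of Proposition~\ref{prop:virtualretractions} (using \cite[Lemma 4.6]{LZ24}), there is an open $U\supseteq Z$ for which $Z$ maps injectively into $U/\widetilde U\cong\pi_1(U\backslash T)$, and this quotient is a free profinite group. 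Free profinite groups are absolutely torsion-free, hence VRC by Lemma~\ref{lem:absolutely torsion free}. Pulling back a retraction of an open subgroup of $U/\widetilde U$ onto the image of $Z$ then gives a retraction onto $Z$ from an open subgroup of $U$.

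\textbf{Step 3.} Here you address something the paper passes over silently: Theorem~\ref{theo:pro-pi-stabilizers-profinite-version} assumes $H$ finitely generated, while this statement does not. Your compactness reduction to finitely generated $H_\alpha$ fixing no vertex is correct. Your control of the $\pi_\alpha$, however, does not work as written.
\begin{itemize}
\item The set $\pi$ cannot depend only on $\calC$. Stabilizers can be pro-$\ell$ for any single $\ell\in\pi(\calC)$, and $\pi(\calC)$ may consist of all primes (e.g.\ $\calC$ the finite soluble groups).
\item Your fallback only treats one stabilizer involving infinitely many primes. The actual danger is that the finite sets $\pi_\alpha$ have infinite union.
\end{itemize}
The repair is short. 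Since $\calC$ is subgroup-closed and misses some finite group, there is $N$ with $A_q,S_q\notin\calC$ for all $q\ge N$. The argument of Proposition~\ref{prop:nonconjugated2}, as used for Theorem~\ref{theo:pro-pi-stabilizers-profinite-version}, already puts $A_q$ or $S_q$ into $\calC$ from a \emph{single} pair of primes $p<q-2$ occurring in the stabilizers. So no $\pi_\alpha$ contains such a pair with $q\ge N$. Since the $\pi_\alpha$ increase with $\alpha$, their union is therefore finite. More simply still, the paper's proof of Theorem~\ref{theo:pro-pi-stabilizers-profinite-version} never uses finite generation of $H$, so it can be applied to $H$ directly.
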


In particular,  Theorem \ref{theo:pro-pi-stabilizers-profinite-version} can be applied to the profinite completion of well known groups of geometric nature. Namely:

\begin{coro}\label{coro:application}
The hypotheses of Theorem \ref{theo:pro-pi-stabilizers-profinite-version} apply to the profinite completion of the following groups:
\begin{enumerate}
\item[(i)] limit groups. In particular $\pi_1(S_g)$, where $S_g$ is a closed orientable surface of genus $g\geq 2$,

\item[(ii)] fundamental group of hyperbolic $3$-manifolds,

\item[(iii)] finitely generated Fuchsian non-triangle groups.
\end{enumerate}
\end{coro}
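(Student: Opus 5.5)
We have to check, for each of the three classes, the two hypotheses of Theorem \ref{theo:pro-pi-stabilizers-profinite-version}. First, the profinite completion $\widehat{G}$ should be (virtually) the fundamental group of a $k$-acylindrical finite graph of profinite groups. Second, every infinite procyclic pro-$p$ subgroup of $\widehat{G}$ should have an open subgroup that is a virtual retract. Since the conclusion of Theorem \ref{theo:pro-pi-stabilizers-profinite-version} concerns stabilizers of a finitely generated pro-$\calC$ subgroup $H$, passing to a finite index subgroup is harmless. If $U$ is open in $\widehat G$, then $H\cap U$ is open in $H$, and it is finitely generated and pro-$\calC$. Its stabilizers are open in those of $H$, so if they are pro-$\pi$, then those of $H$ are pro-$\pi'$, where $\pi'$ is $\pi$ enlarged by the finitely many primes dividing $[H:H\cap U]$. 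Moreover, if $H$ fixes no vertex then neither does $H\cap U$, since a finite group acting on a profinite tree fixes a vertex, and otherwise $H/\!(H\cap U)$-averaging gives a fixed vertex. So it suffices to treat a suitable finite index subgroup $G_0$ in each case.

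\textbf{Splittings.} All three classes are virtually compact special hyperbolic, or toral relatively hyperbolic.
\begin{enumerate}
\item[(i)] Limit groups are toral relatively hyperbolic and virtually compact special (Wise).
\item[(ii)] Closed hyperbolic $3$-manifold groups are virtually compact special (Agol), and cusped ones are virtually special and toral relatively hyperbolic (Wise).
\item[(iii)] Finitely generated Fuchsian groups are virtually free or virtually surface groups.
\end{enumerate}
Hence, as recalled before Theorem \ref{theo:virtually compact special case}, a finite index subgroup $G_0$ has a malnormal quasiconvex hierarchy \cite{Dan21,E}. By \cite{WZ17} this gives a $1$-acylindrical splitting of $\widehat{G_0}$ as a finite graph of profinite groups. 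For surface groups one can do this directly: split $\pi_1(S_g)$ along a non-separating simple closed curve. The cyclic edge group is malnormal, and it remains malnormal in the completion by conjugacy separability and subgroup separability.

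\textbf{Virtual retract hypothesis.} This is the step I expect to be the main obstacle. An infinite procyclic pro-$p$ subgroup $C\le\widehat{G_0}$ need not be the closure of a cyclic subgroup of $G_0$. I would proceed as follows.
\begin{itemize}
\item Use that in virtually compact special groups, every cyclic subgroup $\langle g\rangle$ is a virtual retract. Cyclic subgroups are quasiconvex (or relatively quasiconvex), and quasiconvex subgroups of compact special groups are virtual retracts by Haglund--Wise.
\item Complete a retraction $U\to\langle g\rangle$, with $U$ of finite index, to get a retraction $\widehat U\to\widehat{\langle g\rangle}\cong\widehat\Z$ onto the closure.
\item For a general $C$, apply the retraction $\rho:\widehat U\to\widehat{\Z}$ associated to a suitable $g$. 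Then $\rho$ restricted to $C\cap\widehat U$ is injective onto an open subgroup of the $\Z_p$ factor. Composing $\rho$ with the projection to $\Z_p$ and a section then yields a retraction of an open subgroup onto an open subgroup of $C$.
\end{itemize}
The key point is finding such a $g$. One route uses that $\widehat{G_0}$ is good and that torsion-free procyclic subgroups of completions of hyperbolic special groups are contained in closures of maximal cyclic subgroups. Failing that, I would show directly that the $\Z_p$-valued characters of $\widehat U$, for $U$ ranging over open subgroups, separate $C$, using $b_1$-growth/largeness of these groups. I would then build the retraction from a character nontrivial on $C$.

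Finally, for Fuchsian non-triangle groups, the non-triangle assumption guarantees the group is not rigid: it splits nontrivially, amalgamating along a cyclic subgroup corresponding to a simple closed curve in the orbifold. This splitting is $1$- or $2$-acylindrical, and the argument above then applies to its torsion-free finite index subgroups.
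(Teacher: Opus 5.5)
Your treatment of the splitting hypothesis is fine and in line with what the paper invokes: pass to a finite index $G_0$ with a malnormal quasiconvex hierarchy and carry it to the completion. The gap is in the virtual retract hypothesis. You rightly single it out as the crux, but you never establish it.

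Completing abstract retractions $U\to\langle g\rangle$ only gives retractions onto closures $\overline{\langle g\rangle}\cong\widehat{\Z}$. For an arbitrary $C\cong\Z_p$ in $\widehat{G_0}$ you need an open $U\ge C$ and a character $U\to\Z_p$ that is non-zero, hence injective, on $C$.

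\begin{itemize}
\item Your route (a) rests on a false statement, already for $G_0=F_2$. A pro-$p$ Sylow subgroup $S$ of $\widehat{F_2}$ is free pro-$p$ and not procyclic. So each $S\cap\overline{\langle g\rangle}$, $g\in F_2$, is a closed subgroup of infinite index, hence Haar-null in $S$, and so is their countable union. Thus for Haar-almost every $x\in S$, the group $\overline{\langle x\rangle}\cong\Z_p$ lies in no closure of a cyclic subgroup.
\item Your route (b) merely restates the goal. Largeness or $b_1$-growth produces many $\Z_p$-valued characters on open subgroups. It gives no control over whether the image of a \emph{prescribed} $C$ in $U^{ab}$ is infinite rather than torsion, and that is the whole difficulty.
\end{itemize}

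The missing idea, which is how the paper proceeds (Lemma \ref{lem:absolutely torsion free}, Proposition \ref{prop:virtualretractions}, Corollary \ref{cor:virtually special}), is to leave $\widehat{G_0}$ altogether.
\begin{enumerate}
\item By Haglund--Wise, $G_0$ is a virtual retract of a right-angled Artin group $A$. Hence $\widehat{G_0}$ is a closed subgroup of $\widehat A$, and profinite VRC passes to closed subgroups.
\item For $\widehat A$, use the profinite Bass--Serre tree $T$ of an HNN splitting of $A$ whose vertex group is a retract (a right-angled Artin group on fewer generators).
\item If $C$ fixes a vertex, it lies in a conjugate of the completion of that retract, and you induct on the hierarchy.
\item Otherwise $C$ acts freely on $T$. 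Choose an open $U\ge C$ such that $C$ injects into $U/\widetilde U$; this is possible since $\bigcap_U\widetilde U=1$.
\item Now $U/\widetilde U\cong\pi_1(U\backslash T)$ is free profinite, hence absolutely torsion-free. So for a suitable open subgroup containing the image of $C$, that image maps onto a copy of $\Z_p$ in its torsion-free abelianization. This yields the required character injective on $C$.
\end{enumerate}
Without this, or some other mechanism controlling torsion of $U^{ab}$ along $C$, your proposal does not verify the second hypothesis of Theorem \ref{theo:pro-pi-stabilizers-profinite-version}.
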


Theorem \ref{theo:virtually compact special case} applies also to the following groups:

\begin{coro}\label{coro:application2}
The hypotheses of Theorem \ref{theo:virtually compact special case} apply, up to passing to an open subgroup, to the profinite completion of the following groups:
\begin{enumerate}
\item[(i)] word-hyperbolic graph braid groups,

\item[(ii)] finitely generated Kleinian groups,

\item[(iii)] one-relator groups with torsion.

\item[(iv)] Hyperbolic hyperbolic-by-cyclic groups, i.e., a semidirect product $G\rtimes\Z$ of a hyperbolic group $G$ with infinite cyclic group,

\item[(v)] the fundamental group of standard arithmetic hyperbolic manifolds.
\end{enumerate}
\end{coro}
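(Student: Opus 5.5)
The statement to prove is Corollary \ref{coro:application2}. It is a list of groups, so I will treat each family separately. For each group $\Gamma$ in the list, the plan is to find a finite-index subgroup $G_0 \leq \Gamma$ that is compact special, in the sense of Haglund--Wise, and hyperbolic or toral relatively hyperbolic. The passage to an open subgroup is harmless: $\widehat{G_0}$ is open in $\widehat{\Gamma}$ because $G_0$ is finitely generated of finite index. By the discussion preceding Theorem \ref{theo:virtually compact special case}, such a $G_0$ has, after shrinking further to finite index if needed, a malnormal quasiconvex hierarchy \cite{Dan21,E}. By \cite{WZ17} this hierarchy survives in $\widehat{G_0}$ and gives a $1$-acylindrical action on a profinite tree. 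So the hypotheses of Theorem \ref{theo:virtually compact special case} hold, and the whole task reduces to citing virtual compact specialness together with hyperbolicity or relative hyperbolicity for each family.

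The steps, family by family, are as follows.
\begin{enumerate}
\item[(i)] Graph braid groups are fundamental groups of compact special cube complexes (Abrams' discretized configuration spaces, shown special by Crisp--Wiest and Haglund--Wise). So the word-hyperbolic ones are hyperbolic compact special with no passage to finite index needed.
\item[(ii)] For finitely generated Kleinian groups I split into cases.
\begin{itemize}
\item Convex cocompact groups are hyperbolic and virtually compact special, by Agol and Wise together with the Bergeron--Wise cubulation.
\item Geometrically finite groups with cusps are toral relatively hyperbolic after passing to a torsion-free finite-index subgroup (Selberg). They are virtually compact special by Wise's theorem for geometrically finite / cusped hyperbolic $3$-manifolds.
\item Geometrically infinite groups are handled using tameness and Canary's covering theorem: they are virtual fibre subgroups or quasiconvex-type subgroups, hence virtually special, with toral relative hyperbolicity as the parabolic structure.
\end{itemize}
\item[(iii)] One-relator groups with torsion are hyperbolic (Newman's spelling theorem) and virtually compact special (Wise's Quasiconvex Hierarchy Theorem).
\item[(iv)] Hyperbolic hyperbolic-by-cyclic groups: hyperbolic free-by-cyclic groups are virtually compact special (Hagen--Wise). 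For a general hyperbolic fibre, the known result covers hyperbolic $G$ that are themselves virtually compact special, and I would restrict to that reading if necessary.
\item[(v)] Standard arithmetic hyperbolic manifold groups are virtually compact special by Bergeron--Haglund--Wise. Closed ones are hyperbolic, and cusped ones are relatively hyperbolic with virtually abelian cusps, which become free abelian (toral) after passing to a torsion-free finite-index subgroup.
\end{enumerate}

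I expect the main obstacle to be the relatively hyperbolic cases, (ii) with cusps and (v). Two things must be checked there. First, the parabolic subgroups must be free abelian, which I would obtain by passing to a torsion-free, cusp-neat finite-index subgroup. Second, the malnormal quasiconvex hierarchy from \cite{E} must genuinely exist in the relative setting. The geometrically infinite Kleinian case is also delicate, and I would try to reduce it, via the covering theorem, to subgroups of virtually fibred closed or cusped manifold groups. Case (iv) with a non-special hyperbolic fibre may also need a more careful reading of the statement.
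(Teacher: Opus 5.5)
Your skeleton matches the paper's. For each family you import virtual compact specialness together with hyperbolicity or (toral) relative hyperbolicity. You then pass to a finite-index $G_0$ with a malnormal quasiconvex hierarchy \cite{Dan21,E}, and use \cite{WZ17} for the $1$-acylindrical action of $\widehat{G_0}$. However, two of your family-level steps fail.

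\textbf{The geometrically infinite case of (ii).} Canary's covering theorem only gives ``virtual fibre'' conclusions for subgroups of lattices (covers of finite-volume hyperbolic manifolds). A general finitely generated Kleinian group lies in no lattice. For example, take a doubly degenerate surface group whose ending laminations are not the stable and unstable laminations of any pseudo-Anosov map. It is not a virtual fibre, so by the covering theorem itself it is contained in no lattice. Hence your claim that geometrically infinite groups are ``virtual fibre subgroups or quasiconvex-type subgroups'' is false, and the reduction you sketch at the end cannot be carried out.

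What works is to argue abstractly, which is legitimate because only the isomorphism type of the group enters $\widehat{\Gamma}$. After passing to a torsion-free finite-index subgroup, the group is the fundamental group of the relative compact core of its quotient manifold. Thurston's hyperbolization for pared manifolds then gives a type-preserving isomorphism with a geometrically finite Kleinian group. Your geometrically finite cases apply to that group and yield both virtual compact specialness and toral relative hyperbolicity. The paper simply imports the virtual compact specialness of all finitely generated Kleinian groups from the literature.

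\textbf{Item (iv).} You do not prove the item as stated; you propose to weaken it to fibres that are already virtually special. The missing ingredient is the theorem of Dahmani, Krishna M S and Mutanguha that hyperbolic hyperbolic-by-cyclic groups $G\rtimes\Z$ are cubulable, with no specialness assumption on the fibre $G$. This is what the paper cites as \cite{DMM25}. Combined with Agol's theorem, it makes every such group hyperbolic and virtually compact special. Your general mechanism then finishes the item with no restriction.

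\textbf{A minor point in (v).} Torsion-freeness alone does not make cusp groups free abelian, since they can be non-abelian Bieberbach groups. It is the neat (e.g.\ congruence) subgroup you mention that gives the toral structure. The paper itself does not address this point.
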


This paper is organized as follows. In Section \ref{sec1} we introduce the basic ideas and notations of profinite Bass--Serre theory used in the paper. In Section \ref{sec2} we consider relatively projective profinite groups and prove the second part of the statement (i) of Theorem \ref{theo:kuroshtypedecomposition}. Section \ref{sec3} is dedicated to proving Theorem \ref{theo:kuroshtypedecomposition}. Section \ref{sec4} proves Theorem \ref{theo:pro-pi-stabilizers-profinite-version}, Theorem \ref{theo:virtually compact special case}, Corollary \ref{coro:application} and Corollary \ref{coro:application2}.

Our subgroups are assumed to be closed, homomorphisms to be continuous, finitely generated means topologically finitely generated. Throughout the paper we denote by $\calC$ a variety closed under extensions according to the terminology of \cite{RZ10}.

\section{Preliminaries}\label{sec1}

This section introduces the main definitions of the profinite version of the Bass--Serre theory used throughout this work.

\begin{df}
A {\it profinite graph} is a profinite space $\Gamma$ with a distinguished non-empty subset $V(\Gamma)$ and two continuous maps $d_i: \Gamma \to V(\Gamma)$, $i = 0,1$, which are identity on $V$.
\end{df}

The subset $V(\Gamma)$ is the {\it vertex-set} of $\Gamma$ and $E(\Gamma) = \Gamma - V$ is the {\it edge-set} of $\Gamma$. We call $d_0(e)$ and $d_1(e)$, respectively, by {\it initial} and {\it terminal} vertices. The maps $d_0,d_1$ are the {\it incidence maps}. A profinite graph is connected if all finite quotients of it are connected as usual graphs.

\begin{df}\label{simply connected}
The pro-$\calC$ fundamental group of a finite  connected graph $\Gamma$ is the pro-$\calC$ completion  of the usual fundamental group of $\Gamma$. If $\Gamma$ is a profinite connected graph, one decomposes it as inverse limit $\Gamma = \varprojlim (\Gamma_i)$ of finite graphs $\Gamma_i$. 
The pro-$\calC$ fundamental group of  $\Gamma$ is defined to be
$$\pi_1^{\calC}(\Gamma) = \varprojlim \pi_1^{\calC}(\Gamma_i).$$
It does not depend on the choice of the decomposition (see \cite{Rib17} for details).
If $\calC$ contains all finite groups, then we denote it by $\pi_1(\Gamma)$. We say that a profinite graph $\Gamma$ is {\it $\calC$-simply connected} if $\pi_1^{\calC}(\Gamma) = 1$
\end{df}

Let $\Gamma$ be a profinite graph. We define
$$E^{\ast}(\Gamma) = \Gamma/V(\Gamma).$$
Let $R$ be a profinite ring and $\pi(\calC)$ the set of primes involved in $\calC$. Denote by $C(\Gamma,R)$ the chain complex
$$0 \longrightarrow{} [\![R(E^{\ast}(\Gamma),\ast)]\!]\buildrel{d}\over \longrightarrow [\![R(V(\Gamma))]\!] \buildrel{\varepsilon}\over\longrightarrow R \longrightarrow{} 0$$ 
where $\varepsilon(v) = 1$, $d(\bar{e}) = d_1(e) - d_0(e)$ for the image $\bar{e}$ of $e \in E(\Gamma)$ in $E^{\ast}(\Gamma)$ and $d(\ast) = 0$. The homology groups of $\Gamma$ are defined by
$$H_0(\Gamma,R) = \ker(\varepsilon)/\im(d),\quad H_1(\Gamma,R) = \ker(d).$$

\begin{df}
A profinite graph $\Gamma$ is a {\it pro-$\calC$ tree} if the sequence $C(\Gamma,\F_p)$ is exact for every $p \in \pi(\calC)$. 
\end{df}

Note that, in particular, $\Gamma$ is connected if, and only if, $H_0(\Gamma,R) = 0$ for any profinite ring $R$.

Also, if we define
$$T^g = \{m \in T: gm = m\}$$
to be the set of $g$-fixed points for a pro-$\calC$ tree $T$, then $T^g$ is again a pro-$\calC$ tree (see \cite[Theorem 4.1.5]{Rib17}).

\begin{df}
A {\it finite graph of pro-$\calC$ groups} is a pair $(\calG,\Gamma)$ where $\Gamma$ is a connected finite graph, $\calG$ consists of pro-$\calC$ groups $\calG(m)$ for each $m\in \Gamma$ and monomorphisms $\partial_i: \calG(e) \to \calG(d_i(e))$, for $i=0,1$, for each edge $e \in E(\Gamma)$.
\end{df}

\begin{df}
Let $(\calG,\Gamma)$ be a finite graph of pro-$\calC$ groups and $T$ be a maximal subtree of $\Gamma$. Then the {\it fundamental pro-$\calC$ group} of $(\calG,\Gamma)$ is defined by
$$\Pi_1(\calG,\Gamma) = \Pi_1^{\calC}(\calG,\Gamma) = \bigg(F\amalg \coprod_{v \in V(\Gamma)} \calG(v)\bigg)/N$$
where $F$ is the free pro-$\calC$ group with basis $\{t_e : e \in E(\Gamma)\}$ and $N$ is the closed normal closure of the set
$$\left\{t_e : e \in E(T)\right\} \cup \left\{\partial_0(x)^{-1}t_e\partial_1(x)t_e^{-1} : x \in \calG(e), e \in E(\Gamma)\right\}.$$
Note that the image of $t_e\in \Pi_1(\calG,\Gamma)$ is trivial when $e\in E(T)$.
\end{df}

\begin{rmk}
Unlike in the abstract case, a vertex group may not embed into the fundamental pro-$\calC$ group. When this holds, we will say that the finite graph of pro-$\calC$ groups is {\it injective}. Replacing the vertex groups with their images, we can construct another finite graph of pro-$\calC$ groups (over the same original graph) which is injective and whose fundamental pro-$\calC$ group is isomorphic to the original (see \cite[Section 6.4]{Rib17}). So there is no loss of generality in assuming that our graphs of groups are always injective.
\end{rmk}

The basic free constructions such as free profinite products, free profinite products with amalgamation and $\HNN$-extensions are the most basic examples of fundamental groups of finite graphs of groups (see \cite[Section 6.2]{Rib17}).

\begin{df}
Given an injective finite graph of pro-$\calC$ groups $(\calG,\Gamma)$ with fundamental group $G$, the corresponding {\it standard graph} is the disjoint union
$$S = S(G) = \bigcup_{m \in \Gamma} G/\calG(m).$$
The vertices of $S$ are the cosets $g\calG(v)$ with $v \in V(\Gamma)$ and $g \in G$; the edges of $S$ are the cosets $g\calG(e)$ with $e \in E(\Gamma)$ and $g \in G$; the incidence maps of $S$ are given by
$$d_0(g\calG(e)) = g\calG(d_0(e)),\quad d_1(g\calG(e)) = gt_e\calG(d_1(e))$$
with $e \in E(\Gamma)$ and $t_e = 1$ if $e \in E(T)$.
\end{df}

The fact that $S$ is indeed a profinite graph as well as the basic properties of $S$ are explained in \cite[Section 6.3]{Rib17}. In particular, it is proved in \cite[Theorem 6.3.5]{Rib17} that $S$ is $\calC$-simply connected and so is a pro-$\calC$ tree. We refer to $S$ as {\it Bass--Serre tree} following a terminology commonly used for the abstract version of Bass--Serre theory.

\begin{df}
An action of a profinite group $G$ on a profinite graph $T$ is called {\it $k$-acylindrical} if the stabilizer of any geodesic $[v,w]$ (i.e., the intersection of all subtrees containing $v$ and $w$) in $T$ of length greater than $k$ is trivial. We say that a profinite graph of profinite groups $(\calG,\Gamma)$ is {\it $k$-acylindrical} if its fundamental group acts $k$-acylindrically on its standard pro-$\calC$ tree.
\end{df}

\section{Relatively projective profinite groups}\label{sec2}

In this section we will show that parabolic subgroups of a pro-$\calC$ group which is profinite relatively projective are bounded if the variety $\calC$ does not contain all finite groups.

\begin{df}
A {\it group-pile} is a pair $\mathbf{G} = (G,T)$ consisting of a profinite group $G$, a profinite space $T$ and a continuous action of $G$ on $T$. The stabilizer of $t \in T$ will be denoted by $G_t$.
\end{df}

\begin{df}
A {\it morphism} of group-piles $\alpha : (G,T) \to (G',T')$ consists of a group homomorphism $\alpha : G \to G'$ and a continuous map $\alpha : T \to T'$ such that $\alpha(tg) = \alpha(t)\alpha(g)$ for all $t \in T$ and $g \in G$. The above morphism is an {\it epimorphism} if $\alpha(G) = G'$, $\alpha(T) = T'$ and for every $t' \in T'$, there is $t \in T$ such that $\alpha(t) = t'$ and $\alpha(G_t) = G'_{t'}$. It is {\it rigid} if $\alpha$ maps $G_t$ isomorphically onto $G'_{\alpha(t)}$, for every $t \in T$, and the induced map of the orbit spaces $T/G \to T'/G'$ is a homeomorphism.
\end{df}

\begin{df}
Let $(G,T)$ be a group-pile. An {\it embedding problem} for $(G,T)$ is a pair of morphisms
$$\begin{tikzpicture}[auto]
\node(X) at (0,0) {$(B,T_B)$};
\node(G) at (3,0) {$(A,T_A)$};
\node(F) at (3,2) {$(G,T)$};
\path[->] (X) edge [] node[below] {$\alpha$} (G);
\path[<-] (G) edge [] node[right] {$\varphi$} (F);
\end{tikzpicture}$$
such that $\alpha$ is an epimorphism. It is a {\it $\calC$-embedding problem} if $G,B,A$ are pro-$\calC$. The embedding problem is {\it finite} if $B$ is finite and is {\it rigid} if $\alpha$ is rigid. A {\it solution} to the embedding problem is a morphism $\gamma:(G,T) \to (B,T_B)$ such that $\alpha \gamma = \varphi$.
\end{df}

\begin{df}
We say that $(G,T)$ is {\it $\calC$-projective} if every rigid finite $\calC$-embedding problem for $(G,T)$ has a solution. In this case we say that $G$ is {\it $\calC$-projective relative to} $\{G_t : t \in T\}$. Following the tradition of geometric group theory we shall call $G_t$ parabolic subgroups.
\end{df}

This definition of projectivity is equivalent to the definition of strongly projectivity of \cite{HJ21}.

\medskip

We shall need  the following results that will be used throughout this paper.

\begin{prop}{\cite[Proposition 2.5]{Zal24}}\label{prop:embedding}
	Let $\calC$ be a class of finite groups closed under taking subgroups, quotients and extensions. Let $G$ be $\calC$-projective relative to
	$$\calG = \{G_t : t \in T\}$$
	such that there exists a continuous section $\sigma: G \bl T \to T$. Then $G$ embeds into a free pro-$\calC$ product
	$$\coprod_{s \in \im(\sigma)} G_s \amalg F$$
	where $F$ is a free pro-$\calC$ group of rank $d(G)$, the minimal number of generators of $G$. Moreover, every $G_t$ is of the form $G_s^g \cap G$ for some $g \in \coprod_{s \in \im(\sigma)} G_s \amalg F$, $s \in \im(\sigma).$
\end{prop}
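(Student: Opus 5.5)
The plan is to realize $G$ as a retract-type section of a natural rigid epimorphism of group-piles whose source is the free pro-$\calC$ product in question. Write $S=\im(\sigma)$; this is a closed subset of $T$, hence a profinite space, and $\sigma$ identifies it homeomorphically with $G\bl T$. Set
$$P=\coprod_{s \in S} G_s \amalg F,$$
the free pro-$\calC$ product of the continuous family $\{G_s\}_{s\in S}$ (a sheaf over the profinite space $S$) with a free pro-$\calC$ group $F$ on $d(G)$ generators. Let
$$T_P=\bigcup_{s\in S} P/G_s,$$
topologized as a quotient of the appropriate closed subspace of $P\times T$. Define $\alpha:P\to G$ to be the inclusion on each $G_s$ and to send a basis of $F$ onto a generating set of $G$; this is well defined and continuous by the universal property, and it is surjective. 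On spaces set $\alpha(pG_s)=\alpha(p)s$.

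The first step is to check that $\alpha:(P,T_P)\to(G,T)$ is a rigid epimorphism of group-piles.
\begin{itemize}
\item \emph{Equivariance} is immediate from the definition on spaces.
\item \emph{Surjectivity on spaces} holds because every $t\in T$ is of the form $gs$ with $s=\sigma(Gt)$.
\item \emph{Rigidity of stabilizers.} The stabilizer of $pG_s$ is $G_s^p$, which $\alpha$ maps isomorphically onto $G_s^{\alpha(p)}=G_{\alpha(p)s}$.
\item \emph{Orbit spaces.} Both $T_P/P$ and $T/G$ are identified with $S$, so the induced map is a homeomorphism.
\end{itemize}
Taking $\varphi=\mathrm{id}:(G,T)\to(G,T)$ then gives a rigid $\calC$-embedding problem for $(G,T)$. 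It is not finite, however, since $P$ is infinite.

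The second step, which I expect to be the main obstacle, is to pass from finite to arbitrary rigid $\calC$-embedding problems. I would write $(P,T_P)$ as an inverse limit of finite rigid quotients $(P/U,T_P/U)$ compatible with $\alpha$. Each such quotient, together with the induced map to a finite quotient of $(G,T)$, defines a finite rigid $\calC$-embedding problem, which is solvable by $\calC$-projectivity. The sets of solutions are then nonempty finite sets forming an inverse system, and a compactness (Zorn/inverse limit) argument produces a compatible solution of the original problem. The delicate point is choosing the finite quotients so that rigidity is preserved. Here the free-product structure of $P$ is used: one takes open normal $U$ meeting each $G_s^p$ in a way compatible with the quotient of the sheaf over $S$. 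The continuity of $\sigma$ is needed so that the finite quotients of $S$ give genuine finite piles.

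Given a solution $\gamma:(G,T)\to(P,T_P)$ with $\alpha\gamma=\mathrm{id}$, the map $\gamma$ is injective, which gives the embedding. For $t\in T$ we have $\gamma(t)=pG_s$ for some $p\in P$ and $s\in S$, and then:
\begin{itemize}
\item $\gamma(G_t)\le G_s^p\cap\gamma(G)$ by equivariance;
\item conversely, if $\gamma(g)$ fixes $\gamma(t)$, then $g=\alpha\gamma(g)$ fixes $\alpha\gamma(t)=t$, so $g\in G_t$.
\end{itemize}
Hence $\gamma(G_t)=G_s^p\cap\gamma(G)$. Identifying $G$ with $\gamma(G)$, this is the claimed description of the parabolic subgroups.
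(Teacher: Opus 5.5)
The paper does not prove this statement; it quotes it from \cite{Zal24}. Your Step 1 and your final step are correct and form the standard argument. Since $\alpha$ is injective on every stabilizer $G_s^p$ and induces $T_P/P\cong S\cong T/G$, the map $\alpha:(P,T_P)\to(G,T)$ is a rigid epimorphism. Any $\gamma$ with $\alpha\gamma=\mathrm{id}$ then embeds $G$ with $\gamma(G_t)=G_s^p\cap\gamma(G)$, exactly as you show.

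The gap is Step 2, which carries all the content of passing from finite rigid problems (the hypothesis) to the infinite problem $\alpha$. The rigid finite quotients can in fact be arranged: take $U=W\cap\alpha^{-1}(V)$, with $V$ chosen by compactness of $\{(g,s):g\in G_s\}$ so that $W\supseteq G_s\cap\alpha^{-1}(V)$ for all $s$. The real problem is your claim that the solution sets are finite, which is false in general.
\begin{itemize}
\item A solution of $(G,T)\to(G/V,V\backslash T)\leftarrow(P/U,U\backslash T_P)$ consists of a homomorphism \emph{and} a continuous equivariant map $T\to U\backslash T_P$. This target is a profinite space that is infinite as soon as $S$ is.
\item The point map is not determined by the homomorphism. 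Over an orbit $Gt$ with $G_t\le V$, any point of the fibre may be chosen; the fibre is a free orbit of the finite kernel $\alpha^{-1}(V)/U$, which is nontrivial in general. These choices can be varied locally constantly along $S$.
\item Even $\mathrm{Hom}(G,P/U)$ can be infinite when $d(G)$ is infinite, which the statement allows.
\end{itemize}
So there is no ``inverse limit of nonempty finite sets'', and you give no compact topology on the solution sets to replace it.

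The missing ingredient is the usual Zorn's lemma argument, as for ordinary projective groups, which never looks at solution sets.
\begin{itemize}
\item Since $\ker\alpha\cap G_s^p=1$, every closed normal $N\le\ker\alpha$ acts freely on $T_P$. Hence $(P/N,N\backslash T_P)\to(G,T)$ is a rigid epimorphism.
\item Consider pairs $(N,\gamma_N)$ with $\gamma_N$ a solution for this quotient, ordered by $N'\le N$ and $\gamma_{N'}$ lifting $\gamma_N$. Start from $(\ker\alpha,\mathrm{id})$. Chains have upper bounds via inverse limits, so there is a maximal pair.
\item Suppose $N\neq1$. Choose an open normal $M$ of $P$ with $N\not\le M$. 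The finite kernel $K=N/(N\cap M)$ acts freely, so by compactness there is an open normal $L$ of $P/(N\cap M)$ with $K\cap\mathrm{Stab}(y)L=1$ for every point $y$.
\item Then $(P/(N\cap M),(N\cap M)\backslash T_P)$ is the fibre product of $(P/N,N\backslash T_P)$ with a \emph{finite} rigid $\calC$-epimorphism onto $((P/N)/\bar L,\dots)$. The hypothesis gives a solution of that finite problem, and it combines with $\gamma_N$ into a solution for $N\cap M$. This contradicts maximality, so $N=1$.
\end{itemize}
This finite-to-arbitrary passage is what the paper itself relies on, via \cite[Lemma 5.3.1]{HJ21}, in the proof of Proposition \ref{relproj}, where it solves a rigid problem whose top group $\coprod A_i\amalg F_{\calC}$ is infinite. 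With it in place of your Step 2, your proof is complete.
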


\begin{prop}\label{subgroups} \cite[Proposition 5.4.2]{HJ21}
	Let $G$ be $\calC$-projective relative to $\calG= \{G_t : t \in T\}$ and $H$ a subgroup of $G$. Then $H$ is $\calC$-projective relative to $\{H \cap \Gamma : \Gamma \in \calG^G\}$.
\end{prop}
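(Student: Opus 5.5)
The plan is to view $H$ as acting on the same profinite space $T$ by restricting the $G$-action, so that $(H,T)$ is a group-pile. Its stabilizers are $H_t = H\cap G_t$. Since $G_{gt}=G_t^{g}$, the family $\{H_t : t\in T\}$ is exactly $\{H\cap\Gamma : \Gamma\in\calG^G\}$. So the statement amounts to showing that $(H,T)$ is $\calC$-projective, i.e.\ that every rigid finite $\calC$-embedding problem
$$\varphi:(H,T)\to (A,T_A),\qquad \alpha:(B,T_B)\to (A,T_A)$$
has a solution. I would split this into two steps: first the case of an open subgroup, then a limit argument for an arbitrary closed $H$.

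\emph{Open subgroups.} Let $U\le G$ be open with $n=[G:U]$, and fix coset representatives $g_1,\dots,g_n$. Given a rigid finite embedding problem for $(U,T)$, I would induce it to $G$ by the classical wreath-product construction used to show that open subgroups of projective groups are projective. On the group side this uses $B\wr_{U} G$ and $A\wr_U G$, where $U$ acts on the finite quotient of $G$ through which $\varphi$ factors. On the space side it uses the corresponding induced piles $G\times_U T_B$ and $G\times_U T_A$ (disjoint unions of $n$ translated copies). The inclusion of piles together with $\varphi$ then induces $\tilde\varphi:(G,T)\to(A\wr G,\mathrm{Ind}\,T_A)$. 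The lift $\tilde\alpha$ is obtained from $\alpha$ coordinatewise.

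Two things must then be checked. First, $\tilde\alpha$ is again a rigid epimorphism: stabilizers in the induced piles are conjugates of stabilizers in the original ones, so rigidity passes over. Second, all groups involved remain in $\calC$, which holds because $\calC$ is closed under extensions and a wreath product with a finite $\calC$-quotient of $G$ is an extension. Projectivity of $(G,T)$ then gives a solution $\tilde\gamma$. Restricting $\tilde\gamma$ to $U$ and projecting to the coordinate of the trivial coset yields a solution of the original problem.

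\emph{Arbitrary closed $H$.} Write $H=\bigcap U$ over the open subgroups $U\supseteq H$. A finite embedding problem for $(H,T)$ only involves finite data: $\varphi$ factors through a finite quotient of $H$, and $T_A$, $T_B$ are finite. A compactness argument should then extend $\varphi$ to a morphism $(U,T)\to(A,T_A)$ for some open $U\supseteq H$, still compatible with rigidity of $\alpha$. This uses that the stabilizers $U\cap G_t$ decrease to $H\cap G_t$ and that orbit spaces $T/U$ converge to $T/H$. The open case then solves the extended problem, and restricting the solution to $H$ solves the original one.

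I expect the main obstacle to be the pile part of the wreath-product induction. One must define the induced finite pile so that $\tilde\alpha$ is rigid, meaning it is an isomorphism on stabilizers and induces a homeomorphism on orbit spaces. One must also check that the stabilizers of $\tilde\varphi$ land correctly, so that the restricted solution really maps $H_t$ into the right stabilizers. My first attempt would be to define the induced pile as a disjoint union of copies indexed by $G/U$ and verify rigidity orbit by orbit. Extending $\varphi$ from $H$ to an open $U$ compatibly with the orbit map $T/H\to T_A/A$ is the second delicate point.
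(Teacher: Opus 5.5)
Your overall architecture (open case via a wreath-product induction, then closed $H$ via an open $U\supseteq H$) is viable. The paper gives no argument of its own and just quotes Haran--Jarden. The genuine gap is in the open case, exactly at the step you flagged: the induced pile you propose does not work. Take the disjoint union $G\times_U T_A$ of $n=[G:U]$ translates with the imprimitive action of $A^n\rtimes G/N$.

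First, there is in general no $G$-equivariant map $\tilde\varphi\colon T\to G\times_U T_A$. Composing with $G\times_U T_A\to G/U$ would give a $G$-map $T\to G/U$, which forces every $G_t$ into a conjugate of $U$. This already fails for the trivially projective pile $(G,\{\mathrm{pt}\})$ whenever $U\neq G$. Induction is left adjoint to restriction, so a $U$-map $T\to T_A$ yields a $G$-map \emph{out of} $G\times_U T$, not into $G\times_U T_A$. Moreover, $G\times_U T_B$ is not even defined, since $U$ does not act on $T_B$ before you have a lift $U\to B$.

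Second, the target problem would not be rigid. In the imprimitive action, the stabilizer of a point $y$ in the $i$-th copy is $\{(b,\bar g): \bar g \text{ fixes } i,\ b_i\in B_y\}$. This contains the full factors $B$ in the other $n-1$ coordinates, so it meets $\ker\tilde\alpha=(\ker\alpha)^n$ in $(\ker\alpha)^{n-1}\neq 1$. Your claim that the stabilizers of the induced pile are conjugates of the original ones is therefore false, and rigidity does not pass over.

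What works is the coinduced pile, i.e.\ the right adjoint. Take $N\le U$ open and normal in $G$ with $\varphi(N)=1$, and left coset representatives $g_1=1,\dots,g_n$ of $G/U$. Let $A^n\rtimes G/N$ act on the product $T_A^n$ by the product action, and set $\tilde\varphi(t)=(\varphi(g_j^{-1}t))_j$, together with the Krasner--Kaloujnine map on groups. Build $(B^n\rtimes G/N,\,T_B^n)$ in the same way.

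Rigidity of $\tilde\alpha$ must then be checked, and it uses rigidity of $\alpha$ essentially:
\begin{itemize}
\item Injectivity on stabilizers follows from $\ker\alpha\cap B_y=1$ in each coordinate.
\item By rigidity of $\alpha$, each fibre of $T_B\to T_A$ is a single $\ker\alpha$-orbit, and $A_{\alpha(y)}$ lifts into $B_y$. Hence any element fixing $\tilde\alpha(y)$ lifts coordinatewise to one fixing $y$. This gives surjectivity on stabilizers and injectivity on orbit spaces.
\end{itemize}
Elements of $U$ fix the trivial coset, so $\tilde\gamma(U)$ lies in the subgroup whose permutation part fixes the index $1$. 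On that subgroup, projection to the first coordinate is a morphism of piles to $(B,T_B)$, and composing gives the solution for $(U,T)$.

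The closed-to-open step is fine, but simpler than you suggest. Choose $N$ open and normal in $G$ with $\varphi(H\cap N)=1$ and with $N$ preserving each (clopen) fibre of $\varphi\colon T\to T_A$, and put $U=HN$. Since $\alpha$ is unchanged, no convergence of stabilizers or orbit spaces is needed.
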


We have the following result of independent interest (it generalizes \cite[Proposition 4.7]{Zal24}); we denote by $\alpha(G)$ the number of finite simple $\pi$-groups $S$ in the quotient $G/M(G) \simeq \prod S$, where $M(G)$ is the intersection of all maximal normal subgroups of $G$ and $\pi$ a set of primes. (cf. \cite[Section 8.2]{RZ10}).

\begin{prop}\label{relproj}
Let $(G,T)$ be a $\calC$-projective group-pile and $T_0 = \{t \in T : G_t \neq 1\}$. If $G$ is finitely generated and $G_t$, $t \in T$ are all pro-$\pi$, for a finite set of primes $\pi$, then $T_0$ is closed in $T$ and $\sum_t d(G_t) \leq \alpha(G) d(G)$, where $t$ runs through representatives of $G$-orbits in $T_0$.
\end{prop}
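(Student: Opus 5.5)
We plan to follow the strategy of \cite[Proposition 4.7]{Zal24}: turn the relative projectivity of $(G,T)$ into solutions of suitable rigid finite $\calC$-embedding problems, and count generators in the quotient $G/M(G)$.

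\textbf{Step 1: $T_0$ is closed.} Since $G$ is finitely generated, it has only finitely many open normal subgroups of each index, so $M(G)$ is open and $G/M(G)\simeq \prod_{j=1}^{n} S_j$ is finite. For $t\in T\setminus T_0$ we want a neighbourhood of $t$ on which stabilizers are trivial. We argue by contradiction. Suppose $t_\lambda\to t$ with $G_{t_\lambda}\neq 1$. Then we build a rigid finite $\calC$-embedding problem, obtained by pulling back a finite quotient $(A,T_A)$ of $(G,T)$, whose solution would force a nontrivial element of bounded order, namely a $\pi$-element, to fix $t$. Upper semicontinuity of stabilizers in a group-pile, together with compactness of the space of closed subgroups, then shows that a limit of nontrivial pro-$\pi$ stabilizers lies in $G_t$. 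Hence $G_t\neq 1$, a contradiction.

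\textbf{Step 2: finitely many orbits and the counting.} Choose a finite quotient $\varphi:(G,T)\to (A,T_A)$ where $A=G/M(G)$ and $T_A$ is a finite quotient space of $T/\!\sim$. For each orbit representative $t$ in $T_0$, consider the Frattini-type quotient $G_t\to G_t/\Phi(G_t)\simeq\prod_{p\in\pi}\F_p^{d_p(G_t)}$. Note that $d(G_t)\le\sum_p d_p(G_t)$ fails in general, so instead we use minimal generation of pro-$\pi$ groups via $\Phi$.

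Form the rigid embedding problem $(B,T_B)\to(A,T_A)$. Here $B$ is the free product of the finite groups $G_t/\Phi(G_t)$ with a finite free part, taken modulo a suitable $\calC$-quotient, and $B$ maps onto $A$. It is a $\calC$-group because $\calC$ is a variety closed under extensions containing $\pi$-groups and $A$. A solution $\gamma:(G,T)\to(B,T_B)$ is surjective on each parabolic piece. Counting $d(B)\le d(G)$ over each simple factor $S_j$, as in the Gaschütz-type argument for $d$ of products of simple groups, yields $\sum_t d(G_t)\le \alpha(G)d(G)$. The factor $\alpha(G)$ appears because each simple factor of $G/M(G)$ contributes at most $d(G)$ generators to the $\pi$-parabolic part of the relative projective cover.

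As a simpler alternative for the counting, we could embed $G$ via Proposition \ref{prop:embedding} into $\coprod_s G_s\amalg F$ and apply a Kurosh-type rank bound. However, infinitely many orbits must first be excluded, which is what Step 1 and the finiteness of the bound give.

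\textbf{Main obstacle.} The hardest part is constructing $B$ as a genuine finite $\calC$-group for which the embedding problem is rigid, i.e.\ stabilizers map isomorphically, while simultaneously encoding the generators of all $G_t$. The hypothesis that $\calC$ does not contain all finite groups and that the stabilizers are pro-$\pi$ must be used exactly here, to ensure that the free $\calC$-product quotient remains finite and controlled by $\alpha(G)$. We would first try wreath-product constructions $\F_p[A]$-modules $\rtimes A$, mimicking the prosoluble proof of \cite{Zal24}.
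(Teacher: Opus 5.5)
There is a genuine gap: your Step 2 asserts the count rather than deriving it, and the ingredients you propose do not fit together.

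Finite generation bounds the number of open normal subgroups of each given index. Maximal normal subgroups, however, can have unbounded index, so $M(G)$ need not be open. For the prosoluble group $G=\widehat{\Z}$ one has $M(G)=\prod_p p\Z_p$. Thus $A=G/M(G)$ is not a finite quotient in general, and in any case it is far too coarse to detect $d(G_t)$.

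An embedding problem whose parabolics in $B$ are the groups $G_t/\Phi(G_t)$ cannot be rigid over such an $A$. Rigidity requires the parabolics of $B$ to map isomorphically onto the images $\varphi(G_t)$ in $A$. Moreover, $G_t/\Phi(G_t)\simeq\prod_p\F_p^{d_p(G_t)}$ holds only for pronilpotent $G_t$.

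Your ``main obstacle'' is also misplaced, because $B$ need not be finite. The paper takes $B$ to be a free pro-$\calC$ product and obtains a solution from \cite[Lemma 5.3.1]{HJ21}. The assumption that $\calC$ omits some finite group plays no role here; only the finiteness of $\pi$ does.

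The missing idea is as follows. Argue by contradiction and pick a finite quotient $\varphi\colon G\to A$ together with pairwise non-conjugate non-trivial stabilizers $G_{t_1},\dots,G_{t_n}$ whose images $A_i=\varphi(G_{t_i})$ satisfy $\sum_i d(A_i)>\alpha(G)d(G)$. Put $B=\coprod_{i=1}^n A_i\amalg F_{\calC}$, where $F_{\calC}$ is free pro-$\calC$ mapping onto $A$, with parabolics the conjugates of the $A_i$. This maps rigidly onto $(A,T_A)$, so there is a solution $\gamma$.

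Since $\gamma(G_{t_i})$ is conjugate to $A_i$, the projection $B\to\prod_i A_i$ is surjective on $\gamma(G)$. Dividing each $A_i$ by a maximal normal subgroup gives an epimorphism from $G$ onto a product of $n$ simple $\pi$-groups, hence $n\le\alpha(G)$. Each $A_i$ is a quotient of $G$, so $d(A_i)\le d(G)$, and therefore $\sum_i d(A_i)\le n\max_i d(A_i)\le\alpha(G)d(G)$, a contradiction. So $\alpha(G)$ bounds the number of orbits, each contributing at most $d(G)$; no Gasch\"utz-type count over the simple factors is involved.

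Finally, your Step 1 is not an argument. Pro-$\pi$ groups such as $\Z_p$ have no non-trivial elements of bounded order. Upper semicontinuity only says that limits of elements $g_\lambda\in G_{t_\lambda}$ lie in $G_t$, which is compatible with all such limits being $1$.

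The paper gives no separate proof of closedness, but it follows from the bound. Each orbit in $T_0$ contributes $d(G_t)\ge 1$, so there are at most $\alpha(G)d(G)<\infty$ such orbits; this is finite because $G$ is finitely generated and $\pi$ is finite. Each orbit is compact, so $T_0$ is a finite union of closed sets.
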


\begin{proof}
Suppose that $\sum_t d(G_t) > \alpha(G) d(G)$. Then there exists an epimorphism $\varphi: G \to A$ to a finite group $A$ such that there are non-trivial non-conjugate subgroups $G_{t_1}$,...,$G_{t_n}$ with images $A_1 = \varphi(G_{t_1})$,..., $A_n = \varphi(G_{t_n})$, none of which is contained in a conjugate of the other and with $\varphi\left(\bigcup_{t \in T} G_t\right) = \bigcup_1^n A_i$ such that $\sum_i d(A_i) > \alpha(G) d(G)$.

Define $T_A = \{\varphi(G_t) : t \in T\}$. Let $f: F_{\calC} \to A$ be an epimorphism from a free pro-$\calC$ group $F_{\calC}$ and $B = \coprod_1^n A_i \amalg F_{\calC}$ a free pro-$\calC$ product. Put $T_B = \{A_i^b : b \in B\}$. Let $\beta:(B,T_B) \to (A,T_A)$ be a rigid morphism of group-piles defined by sending $A_i$ to their copies in $A$. Since $(G,T)$ is $\calC$-projective, the embedding problem
$$\begin{tikzpicture}[auto]
\node(X) at (0,0) {$(B,T_B)$};
\node(G) at (3,0) {$(A,T_A)$};
\node(F) at (3,2) {$(G,T)$};
\path[dashed,<-] (X) edge [] node[above left] {$\gamma$} (F);
\path[->] (X) edge [] node[below] {$\beta$} (G);
\path[->] (G) edge [] node[right] {$\varphi$} (F);
\end{tikzpicture}$$
admits a solution $\gamma: (G,T) \to (B,T_B)$ (see \cite[Lemma 5.3.1]{HJ21}). Note that $\gamma(G_{t_i})$ is conjugate to $A_i$. So the natural epimorphism $B \to \prod_i^n A_i$ restricts surjectively on $\gamma(G)$. Taking the quotient of each $A_i$ by a maximal normal subgroup, we obtain an epimorphism from $\gamma(G)$ onto a direct product of $n$ finite simple $\pi$-groups. Since $\gamma(G)$ is a quotient of $G$, the definition of $\alpha(G)$ ensures $n \leq \alpha(G)$ so the number of $A_i$ is bounded by $\alpha(G)$ as well. Choose $j$ such that $d(A_j) \geq d(A_i)$ for $i \neq j$ and note that
$$\alpha(G) d(A_j) \geq n d(A_j) \geq \sum_i d(A_i) > \alpha(G) d(G),$$
that is, $d(A_j) > d(G)$, a contradiction since there is an epimorphism from $\gamma(G)$ to $A_j$.
\end{proof}

\section{Subgroups of free profinite products}\label{sec3}

In this section we will prove our first main results about pro-$\calC$ subgroups of free profinite products.

\begin{df}
The group $G$ is said to be {\it invariably generated} by elements $a,b\in G$ if $G=\langle a^g, b^h\rangle$ for all $g,h\in G$.
\end{df}

We shall use the following result proved by M. Jarden in \cite[Lemma 1.1-1.3]{Jar94}. 

\begin{lem}\label{invgen}(\cite{Jar94})
 Let $p < q$ be primes. Then $S_q$ is invariably generated by element of order $2$ and an element of order $q$ and  $A_q$ is invariably generated by an element of order $p < q - 2$ and an element of order $q$.
\end{lem}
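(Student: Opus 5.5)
The plan is to realize the generators concretely as cycles in the natural action on $\{1,\dots,q\}$, and to observe that every conjugate of such an element has the same cycle type. It then suffices to show that \emph{any} element of the first cycle type together with \emph{any} $q$-cycle generates the group. An element of order $q$ in $S_q$ is necessarily a $q$-cycle, so its conjugates are exactly the $q$-cycles. Invariable generation therefore reduces to a statement about subgroups $H\le S_q$ that contain a $q$-cycle and an element of the prescribed small cycle type.

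The first step is common to both cases. If $H$ contains a $q$-cycle, then $H$ is transitive of prime degree $q$. Since a block system must have block size dividing $q$, $H$ is primitive. For $S_q$, I take the element of order $2$ to be a transposition, so its conjugates are all transpositions. A primitive group containing a transposition is the full symmetric group: the transpositions in $H$ generate a normal subgroup whose orbits form a block system, so it is transitive. A transitive group generated by transpositions is all of $S_q$. Hence $\langle a^g,b^h\rangle=S_q$ for all $g,h$.

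For $A_q$ with $p$ odd, I take the element of order $p$ to be a $p$-cycle, which is even, and use a $q$-cycle (also even). Any subgroup $H$ generated by conjugates is again primitive and contains a $p$-cycle with $p\le q-3$. Jordan's theorem then applies: a primitive group of degree $n$ containing a cycle of prime length $p\le n-3$ contains $A_n$. This gives $H=A_q$. As a cross-check, and as an alternative route avoiding Jordan, Burnside's theorem says a transitive group of prime degree is either contained in $\mathrm{AGL}(1,q)$ or is $2$-transitive. Nontrivial elements of $\mathrm{AGL}(1,q)$ fix at most one point, whereas a $p$-cycle fixes $q-p\ge 3$ points, so $H$ is not affine.

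The main obstacle is the case $p=2$ in the $A_q$ statement. There a transposition is unavailable, so I take a product of $2$-cycles that fixes at least two points, for instance a double transposition. The affine case is again excluded by the fixed-point count. To conclude $H\ge A_q$ I would then invoke the classical Jordan--Manning type bound: a primitive group containing an involution of small support contains $A_n$, with the finitely many small $q$ handled directly.

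Here one must be careful. For $q=5$ the dihedral group $D_{10}<A_5$ contains both a $5$-cycle and double transpositions, so the claim fails for $(p,q)=(2,5)$. I would therefore check the small cases by hand and either require $q\ge 7$ when $p=2$, or note that this case is excluded in Jarden's formulation. For $q\ge 7$ the reflections of $D_{2q}$ are products of $(q-1)/2\ge 3$ transpositions, which rules out the dihedral obstruction for a double transposition.
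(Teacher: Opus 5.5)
\textbf{Where the proposal is right.} Your proof of the $S_q$ clause is correct: a transposition and a $q$-cycle, primitivity from prime degree, and transitivity of the normal subgroup generated by the transpositions. Your proof of the $A_q$ clause for odd $p$ is also correct: a $p$-cycle and a $q$-cycle, then Jordan's theorem for a primitive group containing a cycle of prime length $p\le q-3$. This is the standard content behind the paper's bare citation of Jarden; the paper gives no proof of its own.

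\textbf{Minor inaccuracy.} Your ``alternative route avoiding Jordan'' via Burnside does not actually avoid Jordan. Ruling out $\mathrm{AGL}(1,q)$ by counting fixed points only shows that $H$ is $2$-transitive. Getting from there to $H\ge A_q$ still needs either Jordan's theorem or the CFSG-dependent list of $2$-transitive groups of prime degree.

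\textbf{The genuine error: $p=2$ in the $A_q$ clause.} The same blind spot, ruling out only the affine/dihedral case, breaks your treatment here. You correctly find the $(2,5)$ counterexample, but your repair ``require $q\ge 7$'' is false. Excluding the dihedral (indeed any affine) subgroup does not exclude the non-affine $2$-transitive ones. Take $M=\mathrm{GL}(3,2)\cong\mathrm{PSL}(2,7)$ acting on the seven points of the Fano plane.
\begin{itemize}
\item Since $M$ is simple, $M\le A_7$, and its elements of order $7$ are $7$-cycles.
\item Every involution of $M$ is a transvection. It fixes the three points of a line and swaps the other four in two pairs, so it is a double transposition.
\item Double transpositions are the only involutions in $A_7$, and they form a single $A_7$-class.
\item Given any $b$ of order $7$ in $A_7$, some $S_7$-conjugate of $b$ lies in $M$. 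Hence some $A_7$-conjugate of $b$ lies in $M$ or in $M^{\tau}$ for a transposition $\tau$.
\item Both $M$ and $M^{\tau}$ are proper subgroups of $A_7$ containing $A_7$-conjugates of every involution of $A_7$.
\end{itemize}
So $A_7$ is not invariably generated by elements of orders $2$ and $7$, and the lemma as literally stated fails at $(2,7)$ as well as $(2,5)$.

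The Jordan--Manning bound you allude to is the classical fact that a primitive group of degree $n\ge 9$ containing a double transposition contains $A_n$. So $q=7$ is one of the small cases you would have to check by hand, and it is a genuine exception; the correct range for $p=2$ is $q\ge 11$.

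\textbf{How to fix it.} Take your other option: read the $A_q$ clause for odd $p$ only, since $p=2$ is already covered by the $S_q$ clause. This is exactly how the lemma is used in Proposition~\ref{prop:nonconjugated}, which moreover only needs infinitely many large $q$.
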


\begin{prop}\label{prop:nonconjugated}
Let $G = \coprod_1^n G_i$ be a free profinite product of profinite groups $G_i$. Let $H$ be a subgroup of $G$ such that there exist some non-trivial subgroups $H_i = H \cap G_i^{g_1}$, $H_j = H \cap G_j^{g_2}$ which are not both pro-$\pi$ for the same finite set of primes $\pi$. Then $H$ is not pro-$\calC$ if $\calC$ does not contain all finite groups.
\end{prop}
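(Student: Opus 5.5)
The plan is to argue by contradiction: from $H$ I will produce a continuous homomorphism of $G$ onto a finite group $\bar G$ such that the image of $H$ contains a symmetric or alternating group of large degree, via Lemma \ref{invgen}. That is enough because $\calC$ is closed under subgroups and quotients and omits some finite group $X$. By Cayley's theorem $X$ embeds in $A_m$ for every $m$ large enough, so $A_m,S_m\notin\calC$ for all large $m$. A pro-$\calC$ group has all its finite continuous images in $\calC$, and so do their subgroups. So it suffices to find a finite continuous image of $H$ containing some $A_q$ or $S_q$ with $q$ large.

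\emph{Choosing the primes.} The hypothesis says that the set of primes dividing the order of $H_i$ or of $H_j$ is infinite. Say it is infinite for $H_j$. Fix any prime $p$ dividing $|H_i|$; since $H_i\neq 1$ such a $p$ exists. Then choose a prime $q>p+2$ dividing $|H_j|$, and take $q$ as large as we like. Since $q$ divides $|H_j|$, there is an open normal subgroup $U_j\trianglelefteq G_j$ such that $H_j^{g_2^{-1}}U_j/U_j$ has order divisible by $q$. By Cauchy it contains an element $b$ of order $q$. Likewise there is an open normal $U_i\trianglelefteq G_i$ and an element $a$ of order $p$ in the image of $H_i^{g_1^{-1}}$ in $G_i/U_i$. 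Put $Q_i=G_i/U_i$ and $Q_j=G_j/U_j$.

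\emph{Mapping into $A_q$ or $S_q$.} Next I want homomorphisms $\psi_i:Q_i\to S_q$ and $\psi_j:Q_j\to S_q$ with $\psi_i(a)$ of order $p$ (or of order $2$) and $\psi_j(b)$ of order $q$, with images in $A_q$ when needed. If $i\neq j$, the universal property of the free profinite product gives a map $\varphi:G\to S_q$ that extends $\psi_i$ on $G_i$ and $\psi_j$ on $G_j$ and is trivial on the other factors. The image $\varphi(H)$ then contains $\varphi(g_1)\psi_i(a)\varphi(g_1)^{-1}$ and $\varphi(g_2)\psi_j(b)\varphi(g_2)^{-1}$, which are conjugates of elements of orders $p$ and $q$. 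Invariable generation (Lemma \ref{invgen}) therefore gives $\varphi(H)\supseteq A_q$ (or $=S_q$), the desired contradiction. The case $i=j$ with $g_1,g_2$ in different double cosets should reduce to the case $i\ne j$. I would pass to the open normal subgroup $\ker(G\to G_i/U_i)$, which by the profinite Kurosh theorem for open subgroups is a free profinite product in which $G_i^{g_1}\cap\ker$ and $G_i^{g_2}\cap\ker$ become distinct free factors. This reduction needs care, and if it fails I would instead build $\varphi$ directly by letting $G_i$ act on a disjoint union of two permutation sets twisted by $g_1,g_2$.

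\emph{Main obstacle.} The hard step is constructing $\psi_j$: a finite group $Q_j$ of order divisible by $q$ need not map into $S_q$ with $b$ going to a $q$-cycle, since $Q_j$ might have no subgroup of index $q$. What I would try first is to let $q$ be large compared with $|Q_i|$. Then $Q_i$ embeds in $A_q$, using its regular representation doubled to make it even, padded by fixed points. For $Q_j$, I would replace the target by $S_N$ with $N=|Q_j|$ via the regular representation, in which $b$ becomes a product of $N/q$ disjoint $q$-cycles. The degree can then be adjusted, either by restricting to the action on $\langle b\rangle$-orbits or by choosing $U_j$ so that the image of the pro-$q$ part is a cyclic quotient $\Z/q$ acting on $q$ points, so that Jarden's invariable generation can still be invoked, possibly through its proof in \cite{Jar94} rather than its statement. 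Whatever the precise device, I expect the delicate point of the whole argument to be making the elements of coprime orders $p$ and $q$ sit inside a common $S_q$ or $A_q$ with $q$ prime.
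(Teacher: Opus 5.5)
Your opening reduction (it suffices to produce $A_q$ or $S_q$ as a subquotient of $H$ for arbitrarily large $q$) and your choice of $p$ and $q$ match the paper. The argument breaks at exactly the step you flag, and the fixes you propose do not repair it.

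Because you want a homomorphism $\varphi$ defined on all of $G$, you need $\psi_j\colon Q_j\to S_q$ with $\psi_j(b)$ a $q$-cycle. This can fail for every choice of $U_j$ and every $q$. For example, take $G_j=\prod_\ell \mathrm{PSL}(2,r_\ell)$ over odd primes $\ell$, with primes $r_\ell>11$, $r_\ell\equiv 1 \pmod{\ell}$, and $H_j=\prod_\ell C_\ell$. Every finite quotient of $G_j$ is a finite product of groups $\mathrm{PSL}(2,r_\ell)$. An image of such a product in $S_q$ containing a $q$-cycle is transitive of prime degree and non-solvable. By Burnside's theorem it is then $2$-transitive with a unique minimal normal subgroup, so it is a single $\mathrm{PSL}(2,r_\ell)$. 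That group would need a subgroup of prime index $q$, which is impossible for $r_\ell>11$.

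Restricting to $\langle b\rangle$-orbits does not help: $Q_j$ permutes them only when $\langle b\rangle$ is normal, and then $b$ acts trivially on them. No choice of $U_j$ creates an action of degree $q$ either.

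There is a secondary issue on the $Q_i$ side. Lemma \ref{invgen} only asserts that \emph{some} pair of elements of orders $p$ (or $2$) and $q$ invariably generates. It says nothing about your $\psi_i(a)$, a product of many disjoint $p$-cycles, without extra input such as Jordan-type theorems.

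Your reduction of the case $i=j$ also fails as written. For $K=\ker(G\to G_i/U_i)$ one has $KG_i=G$, so $G_i^{g_1}\cap K$ and $G_i^{g_2}\cap K$ are conjugate in $K$, not distinct free factors. (The paper's proof tacitly treats only $i\neq j$.)

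The missing idea is this: since only a subquotient is needed, you may first pass to an open subgroup of $\bar G=Q_i\amalg Q_j$ in which the two cyclic groups themselves become free factors. Then only $C_p$ and $C_q$ have to be mapped into $S_q$ or $A_q$, and there you are free to send them to Jarden's invariable generators. This is the paper's route.

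Let $a'\in \bar H\cap Q_i^{\bar g_1}$ and $b'\in\bar H\cap Q_j^{\bar g_2}$ be the elements of orders $p$ and $q$ in the image $\bar H$ of $H$. Let $\rho\colon \bar G\to Q_i\times Q_j$ be the canonical map and $U=\rho^{-1}(\langle\rho(a')\rangle\times\langle\rho(b')\rangle)$. Since $\rho$ maps each conjugate of $Q_i$ (resp.\ $Q_j$) isomorphically onto $Q_i\times 1$ (resp.\ $1\times Q_j$), one gets $U\cap Q_i^{\bar g_1}=\langle a'\rangle$ and $U\cap Q_j^{\bar g_2}=\langle b'\rangle$. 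By the Kurosh theorem for the open subgroup $U$, these are $U$-conjugate to free factors $U_{s_i}\cong C_p$ and $U_{s_j}\cong C_q$ of $U$.

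The retraction $w\colon U\to U_{s_i}\amalg U_{s_j}$ sends $a'$ and $b'$ to conjugates of generators. Compose with the homomorphism that sends these generators to the invariable generators of Lemma \ref{invgen}. This maps $\bar H\cap U$ onto $S_q$ or $A_q$, which gives the required subquotient of $H$.
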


\begin{proof}
If $H_i$ and $H_j$ are not both pro-$\pi$, then we can assume that infinitely many primes divide $|H_j|$, otherwise take $\sigma$ as the union of all primes in $|H_i|$ and in $|H_j|$ such that both would be pro-$\sigma$ for a finite set of primes $\sigma$. If both are pro-$\pi$ for an infinite set of primes $\pi$, then we can also assume that infinitely many primes divide $|H_j|$. Then, fixing some $p$ dividing $|H_i|$ we have infinitely many pairs of primes $(p,q)$ such that $q$ divide $|H_j|$. Fix a pair $(p,q)$ with $p < q - 2$.

Taking $N_i$ as an open normal subgroup of $G_i$ which, up to conjugation, does not contain the $p$-Sylow of $H_i$ and $N_j$ correspondingly, we can set $\bar{G_i} = G_i/N_i$, $\bar{G_j} = G_j/N_j$ and $\bar{G} = \bar{G_i} \amalg \bar{G_j}$. The image $\bar{H}$ of $H$ under the canonical projection $\coprod G_i \to \bar{G}$ is such that the image of $H_i$ contains a subgroup of order $p$, the image of $H_j$ contains a subgroup of order $q$. Since pro-$\calC$ groups are closed under continuous images and closed
subgroups, to show that $H$ is not pro-$\calC$ it is enough, for each fixed pair $(p,q)$, to find a subgroup of $\bar{H}$ having a finite quotient isomorphic to $S_q$ or $A_q$ because every finite group
embeds into some $S_n$ and $S_n$ embeds into $A_{n+2}$. 

Thus we may assume $G = \coprod G_i = G_i \amalg G_j$ and that $G_i$ and $G_j$ are finite groups. We can choose (and fix) copies of $C_p$ and of $C_q$ in $H_i$ and $H_j$, respectively. Let $\pi: G \to G_i \times G_j$ be the canonical projection. The images of the chosen copies of $C_p$ and of $C_q$ under $\pi$ generate a subgroup of $\pi(H)$ isomorphic to $C_p \times C_q$, which again we denote by $C_p \times C_q$. Let $U \subset G$ be the inverse image of $C_p \times C_q$ by $\pi$ and set $\til{U} = H \cap U$. By construction, $\til{U}$ contains the subgroup of $H$ generated by the chosen copies of $C_p$ and $C_q$. Since $U$ is open, we can apply the profinite version of the Kurosh subgroup theorem to split $U$ as a free profinite product where the chosen copies of $C_p$ and $C_q$ are conjugate in $U$ to some free factors $U_{s_i}$, $U_{s_j}$, respectively. Thus we obtain an epimorphism $w: U \to U_{s_i} \amalg U_{s_j}$ that sends $U_{s_i}$ onto $U_{s_i}$ and $U_{s_j}$ onto $U_{s_j}$ identically (cf. \cite[Proof of Claim 1]{Pop95}). Since the chosen copies of $C_p$ and $C_q$ are in $\til{U}$ and are conjugate to $U_{s_i}$, $U_{s_j}$, their images under $w$ are, up to conjugation, $U_{s_i}$ and $U_{s_j}$ so that they are in $w(\til{U})$. In particular, $w(\til{U})$ contains elements of orders $p$ and $q$ which are conjugate to generators of the two factors. Then  by  Lemma \ref{invgen} there exists either an epimorphism $U_{s_i} \amalg U_{s_j} \to S_q$ or $U_{s_i} \amalg U_{s_j} \to A_q$ that restricts surjectively to $w(\til{U})$. This finishes the proof.
\end{proof}

From Theorem \ref{theo:kuroshtypedecomposition}, we also can deduce the following:

\begin{coro}\label{finvs}
Let $G = \coprod_{i \in I} G_i$ be a free profinite product of profinite groups $G_i$. Suppose that $\calC$ does not contain all finite groups. If $H$ is a finitely generated pro-$\calC$ subgroup of $G$, then all $H \cap G_i^g$ are finitely generated and there are   only finitely many $H \cap G_i^g\neq 1$ up to conjugation.
\end{coro}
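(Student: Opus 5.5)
We need to prove Corollary \ref{finvs}: if $H$ is a finitely generated pro-$\calC$ subgroup of $G=\coprod_{i\in I}G_i$, then every $H\cap G_i^g$ is finitely generated and only finitely many of them are non-trivial up to conjugation. The plan is to split according to the dichotomy of Theorem \ref{theo:kuroshtypedecomposition}. In the first case we combine the relative projectivity of $H$ with respect to its intersections with conjugates of the free factors with the counting bound of Proposition \ref{relproj}.

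\emph{Case (ii): $H\le G_i^g$ for some $g,i$.} Then $H\cap G_i^g=H$, which is finitely generated. For any other pair $(j,h)$ with $G_j^h\neq G_i^g$, the intersection $G_i^g\cap G_j^h$ is trivial, by the standard malnormality and independence of free factors in free profinite products (stabilizers of distinct vertices of the standard tree meet in edge stabilizers, which are trivial here). Moreover $H\cap G_i^{g'}$ with $G_i^{g'}\neq G_i^g$ is also trivial. So there is exactly one non-trivial intersection up to conjugation, and the claim holds.

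\emph{Case (i): all non-trivial $H\cap G_i^g$ are pro-$\pi$ for a finite set of primes $\pi$.} First I would note that $G$ is projective relative to $\{G_i^g\}$. This means the group-pile $(G,T)$ is projective, where $T$ is the vertex set $\bigcup_i G/G_i$ of the standard tree, which in fact carries a continuous section of $G\backslash T$. The standard reference is the projectivity of free profinite products relative to their factors (\cite{HJ21}). By Proposition \ref{subgroups}, $H$ is then $\calC$-projective relative to $\{H\cap G_i^g\}$, realised as the group-pile $(H,T)$; projectivity for all finite groups gives $\calC$-projectivity, since $H$ is pro-$\calC$. All stabilizers $H_t=H\cap G_t$ are pro-$\pi$ with $\pi$ finite, so Proposition \ref{relproj} applies. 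It gives $\sum_t d(H_t)\le \alpha(H)d(H)$, the sum running over representatives of $H$-orbits of $T_0=\{t:H_t\neq1\}$.

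Next I need $\alpha(H)<\infty$. Since $H$ is finitely generated and $\pi$ is finite, there are only finitely many finite simple $\pi$-groups of order at most any bound, but that alone is not quite enough. The cleaner route is this: a finitely generated profinite group has only finitely many open normal subgroups of any given index. Each simple $\pi$-quotient appearing in the definition of $\alpha$ is a quotient of $H$, and if infinitely many occurred in $H/M(H)$ we would get too many surjections. The real point is to check that $\alpha(H)$, counting simple $\pi$-factors, is finite. Two facts settle this: the set of isomorphism types of finite simple $\pi$-groups that are also $\calC$-groups, and the counting of epimorphisms from $H$ onto a fixed finite group, which is finite.

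With $\alpha(H)<\infty$, the bound shows that each $d(H_t)$ is finite, so every $H\cap G_i^g$ is finitely generated. Since each non-trivial term contributes at least $1$, the number of orbits in $T_0$ is also finite. These orbits correspond exactly to the conjugacy classes of non-trivial intersections.

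The main obstacle I expect is justifying that $\alpha(H)$ is finite; if needed I would restrict $\pi$ to the primes actually occurring. A secondary obstacle is verifying the relative projectivity set-up, namely the existence of a continuous section and the identification of orbits with $H$-conjugacy classes of intersections. If finiteness of $\alpha(H)$ is unavailable in general, I would instead use the finite simple $\pi$-groups occurring in the quotients $A_i$ of the proof of Proposition \ref{relproj}, and bound their number by the number of index-$\le m$ normal subgroups of the finitely generated group $H$.
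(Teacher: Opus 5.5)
Your proof is the paper's argument: $H$ inherits relative projectivity from $G$ through Proposition~\ref{subgroups} (and, being pro-$\calC$, is then $\calC$-projective), and Proposition~\ref{relproj} bounds $\sum_t d(H\cap G_t)$ over orbit representatives; your separate treatment of case (ii), where Proposition~\ref{relproj} does not apply because $H$ need not be pro-$\pi$, is more careful than the paper's one-line proof. The paper uses the finiteness of $\alpha(H)$ without comment, and your two listed ingredients do settle it once the first is stated explicitly: for a finite set $\pi$ there are only finitely many finite simple $\pi$-groups up to isomorphism (a consequence of the classification of finite simple groups), and a finitely generated $H$ has only finitely many normal subgroups with a given finite quotient; without that first fact your fallback via normal subgroups of index at most $m$ would not work, since nothing bounds $m$.
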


\begin{proof} If $H$ is finitely generated, then both statements follows from Proposition \ref{relproj} taking into account Proposition \ref{subgroups}.
\end{proof}

\section{Subgroups of profinite groups acting on trees}\label{sec4}

In this section we shall prove Corollary \ref{theo:finite-edge-groups}, Theorem \ref{theo:pro-pi-stabilizers-profinite-version} and Theorem \ref{theo:virtually compact special case}.

From Theorem \ref{theo:kuroshtypedecomposition}) we can deduce Corollary \ref{theo:finite-edge-groups}:

\begin{proof}[Proof of Corollary \ref{theo:finite-edge-groups}]
Since $\Gamma$ is finite and all edge groups are finite, there exists open normal subgroup $U$ such that $U \cap G(e)^g = 1$ for all $e$. By \cite[Corollary 4.5]{ZM89}, $U$ splits as a free
profinite product $\coprod_i U_i$ where $U_i$ are of the form
$U \cap G(v)^g$, $v \in V(\Gamma)$. Set $H_0 = H \cap U$. If $H_0$ fixes a vertex, $S^{H_0}$ is a nonempty $H$-invariant profinite tree and $H/H_0$ fixes a vertex of $S^{H_0}$ since it is finite, then $H$ fixes a vertex as well. If $H_0$ does not fix a vertex, then every $H_0 \cap U_i^g$ is pro-$\pi$ for a finite set of primes $\pi$ (cf. \cite[Theorem 4.2]{LZ24}). Since $U \cap G(v)^g$ is a $U$-vertex stabilizer, we have that
$$H_0 \cap G(v)^g = H_0 \cap (U \cap G(v)^g)$$
so that $H_0 \cap G(v)^g$ is pro-$\pi$. Note that the index of $H_0 \cap G(v)^g$ in $H \cap G(v)^g$ does not exceed $|G:U|$. It finishes the proof replacing $\pi$ by a possibly larger, but still finite, set of primes.
\end{proof}

We will prove two technical results. First, we prove a natural (partial) extension of \cite[Theorem 4.8]{LZ24}. If $G$ is a pro-$\calC$ acting on a profinite tree $T$ and $U$ a subgroup of $G$ we denote by $\widetilde{U}$ the group generated by all intersections of $U$ with vertex stabilizers of $G$.

\begin{lem}\label{lem:ontocpcq}
Let $G$ be a pro-$\calC$ group acting $k$-acylindrically on a profinite tree $T$. Suppose for some primes $p\neq q$ and some $v,w\in V(T)$ from different $G$-orbits there is a subgroup $K$ of $G$ such that $G_v \cap K$ has an epimorphism onto $C_p$ and $G_w \cap K$ has an epimorphism onto $C_q$. Then there exists a subgroup $H$ of $G$ that admits an epimorphism $f:H \to C_p \times C_q$ such that $|f(H_v)|=p, |f(H_w)|=q$, $\ker(f)$ is not generated by its vertex stabilizers.
\end{lem}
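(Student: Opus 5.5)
Fix epimorphisms $\alpha: G_v\cap K\to C_p$ and $\beta: G_w\cap K\to C_q$. The plan has three steps: first build the map $f$ on a suitable open subgroup of $K$, then descend to a quotient graph of groups, and finally show that $\ker(f)$ contains an element that fixes no vertex.

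\emph{Constructing $f$.} We want an open subgroup $H$ of $K$ containing $K_v=G_v\cap K$ and $K_w=G_w\cap K$, together with a homomorphism $f:H\to C_p\times C_q$ that restricts to $\alpha$ (composed with the inclusion of the first factor) on $H_v$ and to $\beta$ on $H_w$.

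The natural tool is the profinite Bass--Serre structure of $K$ acting on $T$. Since $v$ and $w$ lie in different $G$-orbits, they lie in different $K$-orbits. The kernels $\ker\alpha$ and $\ker\beta$ are open in $K_v$ and $K_w$, so we can choose an open subgroup $U$ of $K$ with $U\cap K_v\le\ker\alpha$ and $U\cap K_w\le\ker\beta$. Passing to a finite quotient graph of groups (the quotient $T/U$ is finite after refining by an open subgroup), we get a finite graph of finite groups. In this graph the images of $K_v$ and $K_w$ sit as distinct vertex groups, because $v$ and $w$ are in different orbits.

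Here $k$-acylindricity enters: edge stabilizers are "small", and after passing to a further open subgroup we can arrange that the edge groups adjacent to $v$ and $w$ lie in $\ker\alpha$ and $\ker\beta$. Then $\alpha$ and $\beta$ extend over the graph of groups by sending all other vertex groups and edge groups to $1$, and the stable letters to $1$. This is compatible with the edge relations, and it defines $f$ on a finite-index subgroup $H$ of $K$ that contains $K_v$ and $K_w$.

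I expect this to be the main obstacle. The difficulty is making $H$ contain the whole of $K_v$ and $K_w$ while forcing the incident edge groups into the kernels. I would first try to take $H$ as the fundamental group of the subgraph of groups consisting of the $v$ and $w$ vertex groups joined by a path, with all edge groups on that path contained in $\ker\alpha\cap\ker\beta$. By $k$-acylindricity, the stabilizer of a path of length $>k$ is trivial, so a long enough detour (via stable letters or conjugates) makes the relevant edge groups trivial. This is essentially a free-product-like situation, where the map exists.

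\emph{$\ker(f)$ is not generated by vertex stabilizers.} Pick $x\in H_v$ with $f(x)$ of order $p$ and $y\in H_w$ with $f(y)$ of order $q$. Consider $z=[x^{m},y^{n}]$, or more simply $x y x^{-1} y^{-1}$, which lies in $\ker f$ because the target is abelian.

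We then argue that $z$ acts hyperbolically. Since $v\ne w$ and $x,y$ fix $v,w$ respectively but (after adjusting by acylindricity) not the geodesic $[v,w]$, the product $xy$ type element fixes no vertex, by the profinite analogue of the ping-pong/fixed-point lemma: if $T^x\cap T^y=\emptyset$ then $xy$ acts without fixed points. Disjointness of $T^x$ and $T^y$ follows because a common fixed vertex together with $v$ and $w$ would give an element stabilizing long paths. If needed, we replace $x,y$ by suitable powers coprime to $p$ and $q$, so their images are unchanged.

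Now use the standard fact that the closed subgroup $\widetilde{\ker f}$ generated by vertex stabilizers is normal in $H$, and that $H/\widetilde{H}\cong\pi_1(T/H)$ is free. The map $f$ factors through $H/\widetilde{\ker f}$, and $f$ kills stabilizers in $\ker f$ but is non-trivial on $H_v$ and $H_w$. If $\ker f$ were equal to $\widetilde{\ker f}$, then $\ker f\le \widetilde H$, and $\widetilde H/\ker f\cong C_p\times C_q$ would be generated by the images of vertex stabilizers. A Mayer--Vietoris/Euler characteristic count on the finite quotient graph $T/\ker f$ (a profinite tree quotient by a normal subgroup generated by stabilizers is a tree) then gives a contradiction. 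The group $C_p\times C_q$ acts on the finite tree $T/\ker f$ with vertex stabilizers the images of $H_v$ and $H_w$, which are $C_p$ and $C_q$. But a finite group acting on a finite tree fixes a vertex, so some vertex stabilizer would be all of $C_p\times C_q$, which contradicts $|f(H_v)|=p$ and $|f(H_w)|=q$ for the stabilizers coming from these orbits. This last step is the one I would make precise using $H_0(T/\ker f)=0$ together with the fact that $T/\widetilde N$ is a pro-$\calC$ tree.
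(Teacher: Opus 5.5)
Your construction of $f$ has a genuine gap. You correctly flag it as the main obstacle, but the sketch you give does not work.

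First, nothing in the hypotheses makes $U\backslash T$ finite. Neither $G\backslash T$ nor $K\backslash T$ is assumed finite, and shrinking $U$ only makes $U\backslash T$ bigger. Second, the vertex groups $U\cap G_x$ are open in $K\cap G_x$, hence infinite in general, so you do not obtain a finite graph of finite groups. More seriously, once $U\cap K_v\le\ker\alpha$, the vertex group of $U$ at $v$ is killed by $\alpha$. If you instead keep all of $K_v$ and $K_w$ inside $H$, the prescription ``$\alpha$ at $v$, $\beta$ at $w$, trivial elsewhere'' violates the edge relations as soon as some edge stabilizer at $v$ is not contained in $\ker\alpha$. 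For $k\ge 1$, $k$-acylindricity does not prevent this: it only kills stabilizers of geodesics of length $>k$. Moving $w$ far away does not change the edge stabilizers at $v$. Moreover, a geodesic in a profinite tree need not be a finite path, so a ``subgraph of groups along a path'' is not available either.

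The missing idea is to make edge stabilizers trivial before defining $f$. The paper does this as follows.
\begin{enumerate}
\item Take $U$ open normal in $G$ with $G_vU/U$ and $G_wU/U$ containing subgroups of order $p$ and $q$.
\item Pass to $G_U=G/\widetilde U$ acting on $\widetilde U\backslash T$, where vertex stabilizers are finite.
\item Invoke \cite[Proposition 3.9]{LZ24}, which is where $k$-acylindricity is really used. It gives an action of $G_U$ on a profinite tree with trivial edge stabilizers, whose vertex stabilizers are the maximal ones, and the images of $v,w$ stay in distinct orbits.
\item For $H=\langle G_v,G_w\rangle$ there, Mayer--Vietoris with trivial edge groups and $H\backslash T$ a tree gives $H^{ab}\cong G_v^{ab}\oplus G_w^{ab}$. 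This yields $f$ and forces $f$ to be trivial on every other vertex stabilizer.
\item Images of subgroups generated by vertex stabilizers are again so generated, so the conclusion pulls back to $G$.
\end{enumerate}

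Given such an $f$, your fixed-point argument for the kernel is correct and tidier than the paper's argument via $H/L$. If $\ker f$ were generated by its vertex stabilizers, $\ker f\backslash T$ would be a profinite tree (not necessarily finite). The group $H/\ker f\cong C_p\times C_q$ acts on it with stabilizers $f(H_u)$, so some $f(H_u)$ would be all of $C_p\times C_q$. You must rule this out for every vertex $u$, not only those in the orbits of $v$ and $w$; in the paper's construction this holds because $f$ is trivial on all other vertex stabilizers. The hyperbolic commutator is beside the point: an element fixing no vertex can still lie in the closed subgroup generated by vertex stabilizers.
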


\begin{proof}
W.l.o.g. we may assume that $G_v$ and $G_w$ are maximal (by inclusion) vertex stabilizers. Choose an open normal subgroup $U$ of $G$ such that $G_vU/U$ and $G_wU/U$ have subgroups $G_p$ of order $p$ and $G_q$ of order $q$, respectively. Consider the action of $G_U=G/\widetilde U$ on $T_U=\widetilde U \bl S$ and note that in $G_U$ the vertex stabilizers of the images $v_u, w_u$ of $v$ and $w$ in $T_U$ are  $G_vU/U$ and $G_wU/U$ (still in different orbits) and refining $U$ we may assume that $G_vU/U$ and $G_wU/U$ are maximal vertex stabilizers. By \cite[Proposition 3.9]{LZ24} $G_U$ acts on a simply connected (infinite) profinite tree $T^{\ast}_U$ with trivial edge stabilizers, the vertex stabilizers of $T^{\ast}_U$ are maximal vertex stabilizers of $T_U$. Hence $G_p$ and $G_q$ are subgroups of some  stabilizers of vertices of $T^{\ast}_U$ from distinct $G_U$-orbits. Note that if a subgroup of $G$ is generated by its vertex stabilizers then its image in $G_U$ is generated by its vertex stabilizers as well, and vertex stabilizers in $G_U$ are finite. Therefore to prove the lemma it suffices to consider $H$ in $G_U$ containing $G_p$ and $G_q$ and construct an epimorphism $f:H \to C_p \times C_q$, where $C_p$ and $C_q$ are cyclic groups of order $p$ and $q$, with $|f(G_p)|=p, |f(G_q)|=q$ such that $\ker(f)$ is not generated by torsion. For simplicity of notation we will consider, under the reduction to $G_U$, $H = \cic{G_v,G_w}$. If there is no epimorphism from $G_v$ onto $C_p$ and from $G_w$ onto $C_q$ we can replace $G_v$ by $G_v \cap K$ and $G_w$ by $G_w \cap K$. In this case we would have $H = \cic{G_v \cap K, G_w \cap K}$ with $H_v = G_v \cap K$ and $H_w = G_w \cap K$. So there is no loss of generality in assuming that there are epimorphisms $f_v: G_v \to C_p$ and $f_w: G_w \to C_q$.

Using the Mayer--Vietoris sequence for groups acting on trees (see \cite{Mel90}), we have
$$\cdots \to 0 \to \bigoplus_{x \in H \bl V(T)} H_1(H_x) \to H_1(H) \to \bigoplus_{y \in H \bl E(T)} H_0(H_y) \stackrel{d_0}{\to} \bigoplus_{x \in H \bl V(T)} H_0(H_x) \to \cdots$$
where we are simplifying $H_i(-,\widehat{\Z})$ as $H_i(-)$. Set $Y = H \bl T$. Note that $Y$ is a tree because $H$ is generated by vertex stabilizers (see \cite[Proposition 3.9.2]{Rib17}). We have the chain complex associated to $Y$ and the map $d: [[\widehat{\Z}(E*(Y),*)]] \to [[\widehat{\Z}V(Y)]]$. Since $Y$ is a tree, the first homology group $H_1(Y,\widehat{\Z})$ is trivial, that is, $\ker(d) = 0$. Now the maps $d_0$ and $d$ are the same so that $d_0$ is injective. Since $d_0$ is injective, we have the isomorphism
$$\bigoplus_{x \in H \bl V(T)} H_1(H_x) \simeq H_1(H).$$
Recall that $A^{ab} = A/[A,A]$ for any group $A$. Since $H$ is generated by $G_v$ and $G_w$ then $H^{ab}$ is generated by $G_v^{ab}$ and $G_w^{ab}$ since the abelianization is a continuous epimorphism. So, we can reduce our sum to
$$H^{ab} \simeq G_v^{ab} \oplus G_w^{ab}.$$
Since there is an epimorphism $f_v: G_v \to C_p$, then $[G_v,G_v] \leq \ker f_v$, hence, it induces a well-defined epimorphism $\til{f_v}: G_v^{ab} \to C_p$. In the same way there is an epimorphism $\til{f_w}: G_w^{ab} \to C_q$. We can define an epimorphism $\varphi: G_v^{ab} \oplus G_w^{ab} \to C_p \oplus C_q$ by $\varphi(x,y) = (\til{f_v}(x),\til{f_w}(y))$. Thus composing $\varphi: H^{ab} \to C_p \oplus C_q$ with $\pi: H \to H^{ab}$ we obtain an epimorphism $f: H \to C_p \oplus C_q$. By construction, $|f(H_v)| = p$ and $|f(H_w)| = q$.

Note that all $G$-stabilizers except for stabilizers of $v$ and $w$, up to translation, vanish in the abelianization of $H$. Let $L$ be the group generated by these stabilizers and consider the quotient $H/L$, which is non-abelian since it has distinct non-trivial stabilizers. The image of $\widetilde{\ker(f)}$ in $H/L$ is trivial because an element of $C_p$ or $C_q$ cannot be in $\ker(f)$, otherwise $C_p$ and $C_q$ would be mapped onto $1$ by $f$. On the other hand, if we take a generator $a$ of $C_p$ and a generator $b$ of $C_q$, their commutator $[a,b]$ is in $\ker(f)$. Moreover, since $G_v \cap G_w = 1$ and $a$ and $b$ do not commute, the image of $\ker(f)$ in $H/L$ is non-trivial so that $\widetilde{\ker(f)} \neq \ker(f)$, which means that $\ker(f)$ is not generated by its own vertex stabilizers.

Thus, we have the desired epimorphism $f: H \to C_p \times C_q$.
\end{proof}

\begin{prop}\label{prop:nonconjugated2}
Let $(\calG,\Gamma)$ be a $k$-acylindrical finite graph of profinite groups, $G$ its fundamental group and $T$ its Bass--Serre tree. Let $H$ be a subgroup of $G$ such that there exist some non-trivial subgroups $H_v$, $H_w$ which are not both pro-$\pi$ for the same finite set of primes $\pi$ and $v \neq w \in T$. Assume that for primes $p \neq q$ with $C_p$ a subquotient of $H_v$ and $C_q$ a subquotient of $H_w$, there is a subgroup $U_H$ of $H$ such that $H_v \cap U_H$ has an epimorphism onto $C_p$ and $H_w \cap U_H$ has an epimorphism onto $C_q$. Then $H$ is not pro-$\calC$ if $\calC$ does not contain all finite groups. Moreover, if the set $\pi$ of primes $p$ such that $C_p$ is a subquotient of $H_v$ for some $v$ is infinite and, for every pair of distinct primes $p,q \in \pi$, there are vertices $v \neq w \in T$ and a subgroup $U_H \leq H$ such that $H_v \cap U_H$ has an epimorphism onto $C_p$ and $H_w\cap U_H$ has an epimorphism onto $C_q$, then the same conclusion holds.
\end{prop}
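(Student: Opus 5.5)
The plan is to follow the proof of Proposition~\ref{prop:nonconjugated} and replace the Kurosh subgroup theorem there by Lemma~\ref{lem:ontocpcq}. Throughout, we argue by contradiction: assume $H$ is pro-$\calC$.

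\emph{Choice of primes.} Since $H_v$ and $H_w$ are not both pro-$\pi$ for a finite $\pi$, the same reasoning as in Proposition~\ref{prop:nonconjugated} shows we may assume infinitely many primes divide $|H_w|$. Fix a prime $p$ with $C_p$ a subquotient of $H_v$. Then there are infinitely many primes $q$ with $C_q$ a subquotient of $H_w$, and for each such pair $(p,q)$ with $p<q-2$ the hypothesis supplies the subgroup $U_H$. In the ``moreover'' case we use the infinite set $\pi$ directly: pick any $p<q-2$ in $\pi$ together with the vertices $v\neq w$ and the subgroup $U_H$ that the hypothesis provides. The aim is to show that for each such pair $(p,q)$, some subquotient of $H$ maps onto $S_q$ or $A_q$. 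Since $q$ ranges over infinitely many primes, and every finite group embeds in some $S_n$, which embeds in $A_{n+2}$, this forces $\calC$ to contain all finite groups, which is the required contradiction.

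\emph{Reduction to a graph of finite groups with trivial edge stabilizers.} Restrict the action to $H$; it is still $k$-acylindrical on $T$. If $v,w$ lie in the same $H$-orbit, first pass to a suitable open subgroup so that they lie in different orbits; otherwise this step is unnecessary. Apply Lemma~\ref{lem:ontocpcq} with $G$ replaced by $H$ and $K=U_H$. Its proof in fact yields a quotient $G_U=H/\widetilde U$ acting on a profinite tree $T_U^{\ast}$ with trivial edge stabilizers and finite vertex stabilizers. In this quotient, subgroups $G_p\cong C_p$ and $G_q\cong C_q$ lie in stabilizers of vertices from distinct orbits.

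\emph{Producing $S_q$ or $A_q$.} Let $H'=\langle G_p,G_q\rangle\le G_U$. An action with trivial edge stabilizers and two orbits of relevant vertices should make $H'$ a free profinite product $G_p\amalg G_q\cong C_p\amalg C_q$, or at least map onto it with the factors embedded. The tool here is the profinite Kurosh-type result for actions with trivial edge stabilizers, as cited via \cite[Proposition 3.9]{LZ24} in the proof of Lemma~\ref{lem:ontocpcq}. Given such a map, Lemma~\ref{invgen} yields an epimorphism from $C_p\amalg C_q$ onto $S_q$ or $A_q$, sending the generators to invariable generators of orders $p$ and $q$. This exhibits $S_q$ or $A_q$ as a subquotient of $H$, as needed. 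The epimorphism $f\colon H\to C_p\times C_q$ from Lemma~\ref{lem:ontocpcq}, whose kernel is not generated by vertex stabilizers, ensures the two factors genuinely separate: they are not absorbed into a single vertex group. This plays the role of the open subgroup $U=\pi^{-1}(C_p\times C_q)$ in the proof of Proposition~\ref{prop:nonconjugated}.

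\emph{Main obstacle.} The delicate step is justifying that $\langle G_p,G_q\rangle$ maps onto $C_p\amalg C_q$ compatibly with the factors. This requires that $T_U^{\ast}$ is genuinely simply connected with trivial edge stabilizers, so that the free-product decomposition of the subgroup holds. It also requires that the passage to $H/\widetilde U$ does not collapse $G_p$ or $G_q$ into a common vertex stabilizer; Lemma~\ref{lem:ontocpcq} is what guarantees this.
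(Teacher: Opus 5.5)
You have a genuine gap, and it sits exactly at the step you call the main obstacle. Your choice of primes and your final use of invariable generation (Lemma~\ref{invgen}) are fine, but the justification you propose for the middle step does not work. There is no Kurosh theorem for arbitrary closed subgroups of profinite groups acting with trivial edge stabilizers. Such subgroups are in general only relatively projective (cf.\ Proposition~\ref{subgroups}), and \cite[Proposition 3.9]{LZ24} only produces the tree $T_U^{\ast}$; it says nothing about $\langle G_p,G_q\rangle$.

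The heuristic is in fact false. In $C_2\amalg C_2=\langle a\rangle\amalg\langle b\rangle\cong\widehat{\Z}\rtimes C_2$ with $t=ab$, one computes $a\,b^{t^{\lambda}}=t^{2\lambda+1}$. Choose $\lambda\in\widehat{\Z}$ with $3$-adic component $-1/2$ and all other components $0$. Then $\langle a,b^{t^{\lambda}}\rangle\cong\prod_{\ell\neq 3}\Z_\ell\rtimes C_2$, which neither is nor maps onto $C_2\amalg C_2$. Yet $a$ and $b^{t^\lambda}$ fix vertices in different orbits of a profinite tree with trivial edge stabilizers.

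Two further problems. First, you never actually use the epimorphism $f$ of Lemma~\ref{lem:ontocpcq}. Saying it ``ensures the factors genuinely separate'' is not an argument, yet $f$ is precisely what must replace the open subgroup $\pi^{-1}(C_p\times C_q)$ of Proposition~\ref{prop:nonconjugated}. Second, your quotient $H/\widetilde U$, with $U$ open in $H$, need not be the fundamental group of a finite graph of finite groups, since $H\backslash T$ may be infinite. So even the Kurosh theorem for open subgroups is not readily available there.

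The paper closes this gap as follows.
\begin{enumerate}
\item It extends $f_L\colon L\to C_p\times C_q$, with $L$ open in $H$, to an epimorphism $f_U\colon U\to C_p\times C_q$ with $U$ open in $G$. It chooses $U$ so that $K=\ker f_U$ is not generated by vertex stabilizers.
\item Then $U/\widetilde K$ acts on the simply connected tree $\widetilde K\backslash T$ with finite quotient $U\backslash T$. Hence it is the fundamental group of a finite graph of finite groups, i.e.\ the profinite completion of an abstract virtually free group $\Pi$.
\item The kernel $K/\widetilde K$ of the induced map $f_K$ is projective, hence torsion-free. So the restriction $f^{abs}$ of $f_K$ to $\Pi$ has free kernel and, by \cite[Lemma 4.7]{LZ24}, factors through $C_p\ast C_q$.
\item Completing, $f_K$ factors through $C_p\amalg C_q$, and the image of $L$ there contains subgroups of order $p$ and of order $q$.
\item Only then is Lemma~\ref{invgen} applied to the image of $L$.
\end{enumerate}

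The point is that one never needs $\langle G_p,G_q\rangle$ to be a free product. Invariable generation only needs a homomorphism to $C_p\amalg C_q$ that is injective on the two finite subgroups. That homomorphism comes from the torsion-free kernel, together with passing to an open subgroup of $G$ so that the quotient graph is finite.
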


\begin{proof}
If $H_v$ and $H_w$ are not both pro-$\pi$, then we can assume that infinitely many primes divide $|H_w|$, otherwise take $\sigma$ as the union of all primes in $|H_v|$ and in $|H_w|$ such that both would be pro-$\sigma$ for a finite set of primes $\sigma$. If both are pro-$\pi$ for an infinite set of primes $\pi$, then we can also assume that infinitely many primes divide $|H_w|$. Then, fixing some $p$ dividing $|H_v|$ we have infinitely many pairs of primes $(p,q)$ such that $q$ divide $|H_w|$.

Fix a pair $(p,q)$ with $p \neq q$. We can apply Lemma \ref{lem:ontocpcq} to obtain an epimorphism $f_L: L \to C_p \times C_q$ from an open subgroup $L$ of $H$ with the kernel not generated by vertex stabilizers such that $f_L(L_v)$ has order $p$ and $f_L(L_w)$ has order $q$. We can extend this epimorphism to an epimorphism $f_U$ of some open subgroup $U$ of $G$ containing $L$ (see \cite[Proposition 8.3.8]{RZ10}). By \cite[Lemma 4.6]{LZ24} we can choose such $U$ with the kernel $K$ of $f_U$ not generated by its vertex stabilizers. Let $\widetilde K$ be the normal subgroup of $K$ generated by the vertex stabilizers. Then $\widetilde K$ is normal in $U$ and we can consider $U/\widetilde K$  acting on $\widetilde K\backslash T$ which is simply connected by \cite[Lemma 4.7]{ZM89}. Then by \cite[Proposition 4.4]{ZM89} $U/\widetilde K$ is the fundamental profinite group of a finite graph of finite groups $\Pi_1(\calU,U\backslash T)$ and so is the profinite completion of the abstract fundamental group $\Pi=\Pi_1^{abs}(\calU, U\backslash T)$ (cf. \cite[Proposition 6.5.6]{Rib17}). Note that we have an induced epimorphism $f_K:U/\widetilde K\to C_p \times C_q$ and we denote by $f^{abs}$ its restriction to $\Pi$. Since $K/\widetilde K$ is projective (see \cite[Corollary 4.1.3]{Rib17} or \cite[Theorem 2.6]{ZM88}) and so is torsion-free, the kernel $\ker(f^{abs})$ is free and so by \cite[Lemma 4.7]{LZ24} $f^{abs}$ factors through the free product $C_p \ast C_q$. Hence $f_K$ factors through the free profinite product $C_p\amalg C_q=\widehat{C_p \ast C_q}$. Thus we have the following commutative diagram:
 
$$\xymatrix{&& U\ar[d]&\\
           L\ar[rrdd]_{f_L}\ar[urr]\ar@{-->}[rrd]\ar[rr] &&U/\widetilde K\ar[d]&\Pi\ar[l]\ar[d]\\
           &&C_p\amalg C_q\ar[d]&C_p*C_q\ar[l]\ar[ld]\\
                        && C_p \times C_q&}$$          
 
Since $f_L(L_v)$ has order $p$ and $f_L(L_w)$ has order $q$ and the images of $L_v$ and $L_w$ in $U/\widetilde K$ are finite, it follows from this commutative diagram that the image of $L$ in $C_p \amalg C_q$ contains a group of order $p$ and a group of order $q$.

Now, as at the end of the proof of Proposition \ref{prop:nonconjugated}, we deduce from Lemma \ref{invgen} that $L$ has $S_q$ or $A_q$ as its quotients.

This argument holds for every pair $(p,q)$ satisfying Lemma \ref{invgen}. Recall that every finite group is a subgroup of $S_n$ for some natural $n$ and we can embed $S_n$ into $A_{n+2}$. Then $\calC$ must contain all finite groups, a contradiction.

The moreover part follows by exactly the same argument: fix one prime $p \in \pi$ and let $q\in \pi$ vary over infinitely many primes with $p<q$ and $q\neq 3$. For each such pair $(p,q)$, the hypotheses give vertices $v \neq w$ and a subgroup $U_H \leq H$ to which the preceding argument applies. Hence $\calC$ contains $A_q$ or $S_q$ for infinitely many $q$ and therefore contains all finite groups.
\end{proof}

Now we can prove the next result:

\begin{proof}[Proof of Theorem \ref{theo:pro-pi-stabilizers-profinite-version}]
Choose $A \simeq \Z_p \leq H_v$ and $B \simeq \Z_q \leq H_w$. Since open subgroups of infinite cyclic pro-$p$ groups are infinite cyclic pro-$p$, by hypothesis we may assume that we have retractions $r_p: U_p \to A$ and $r_q: U_q \to B$ where $U_p, U_q$ are open in $G$. Set $G_0 = U_p \cap U_q$, $A_0 = A \cap G_0$ and $B_0 = B \cap G_0$. Restricting $r_p$ to $G_0$ we obtain a homomorphism $G_0 \to A$. Since $r_p$ is a retraction, $A_0 \leq r_p(G_0)$ so that $r_p(G_0)$ is an infinite pro-$p$ group. We can apply the same argument to $r_q$. Setting $H_0 = H \cap G_0$, we have that the image of $(H_0)_v$ under $r_p$ and the image of $(H_0)_w$ under $r_q$ are infinite. Since $H_0$ is open in $H$, we may assume that $H = H_0$.

Let
$$\pi= \{p : C_p\text{ is a subquotient of some }H\text{-vertex stabilizer}\}.$$
If $\pi$ is infinite, then the construction above holds for every pair of distinct primes $p,q \in \pi$. Thus, by Proposition \ref{prop:nonconjugated2}, $H$ is not pro-$\calC$, a contradiction.
\end{proof}

\begin{df}
A {\it hierarchy of groups} of length $0$ is a single vertex labeled by a group. A {\it hierarchy of groups} of length $n$ is a graph of groups $(\calG_n,\Gamma_n)$ together with hierarchies of length $n-1$ on each vertex of $\Gamma_n$. If $\calH$ is a length $n$ hierarchy of groups, the $n$-th level of $\calH$ is the graph of groups $(\calG_n,\Gamma_n)$. For $1 \leq k \leq n$, the $(n-k)$-level of $\calH$ is the disjoint union of the $(n-k)$-th levels of the hierarchies on the vertices of $\Gamma_n$. The terminal groups are the groups labeling the vertices at level $0$.
\end{df}

It is shown in \cite[Theorem 1.4, Lemma 7.3]{WZ17} and \cite{E} that if $G$ is a hyperbolic or toral relatively hyperbolic virtually compact special group then $G$ has a finite index subgroup $G_0$ with a malnormal hierarchy and it passes to the $\widehat{G_0}$ in such a way that $\widehat{G_0}$ acts $1$-acylindrically on a profinite tree $T$ (see \cite[Theorem 3.3, Theorem 4.2]{WZ17}).

Next we shall need the profinite version of VRC property for the profinite completion of a virtually special groups, so we dedicate a subsection to this.

\subsection{Profinite VRC groups}
A group $G$ is called {\it VRC} if every cyclic subgroup of $G$ is a virtual retract of $G$. It is a very important property in geometric group theory that was introduced by Long and  
Reid explicitly in \cite{LR}, however, implicitly they were investigated much earlier. Minasyan made systematic study of this property in \cite{M} where he emphasized that property
VRC in general, apart from having numerous applications, is also very interesting by themselves.

We introduce here the profinite version of this property. It does not transport to the profinite case well literally since a torsion-free infinite procyclic group might be an infinite direct product of procyclic pro-$p$ groups, so we shall use procyclic pro-$p$ groups for each $p$ instead.

\begin{df}
We say that a profinite group $G$ is {\it VRC} if every procyclic pro-$p$ group of $G$ is a virtual retract of $G$.
\end{df} 

Note that for profinite groups VRC passes to subgroups, since for any subgroup $H$ of a VRC profinite group $G$, $U \cap H$ is open in $H$ for any open subgroup $U$ of $G$.

The next lemma shows that absolutely torsion-free profinite groups (i.e., profinite groups whose all open subgroups have torsion-free abelianization) are VRC. Note that absolutely torsion-free property was introduced and studied by W\"urfel in \cite{Wu85} with motivation from Galois theory. Free pro-$p$ groups, free abelian pro-$p$ groups and pro-$p$ completions of surface groups are examples of absolutely torsion free pro-$p$ groups. W\"urfel proved that the class of absolutely torsion free pro-$p$ groups contains a $p$-Sylow subgroup of the absolute Galois group of a field containing all $p$-power roots of unity and is closed under forming inverse limits and free pro-$p$ products (see  \cite[Proposition 3]{Wu85}); moreover, he proved that a direct product of a free abelian pro-$p$ group and an absolutely torsion-free pro-$p$ group is absolutely torsion-free. In \cite{SZ} were characterized the right-angled Artin groups whose pro-$p$ completion is absolutely torsion free. 

\begin{lem}\label{lem:absolutely torsion free}
Let $G$ be an absolutely torsion-free profinite group. Then $G$ is VRC.
\end{lem}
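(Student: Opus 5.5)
Let $C\le G$ be a procyclic pro-$p$ subgroup; we must find an open subgroup $U\le G$ containing $C$ and a continuous retraction $r\colon U\to C$. If $C$ is finite then, since $G$ is torsion-free (the closed subgroup $\overline{\langle g\rangle}$ is abelian and its open subgroups have torsion-free abelianization, which forces $g$ to have infinite order or be trivial), $C=1$ and there is nothing to do. So we may assume $C\simeq\Z_p$, generated topologically by some element $c$. We will build the retraction by pushing $C$ through the abelianization of a suitable open subgroup, where it splits off.

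The key step is to choose $U$ open with $C\le U$ such that the image of $C$ in $U^{ab}$ is a direct summand on which $C$ maps isomorphically. First, as $C=\bigcap_U U$ over open $U\supseteq C$, the compactness argument shows that for some open $U\supseteq C$ the composite $C\to U^{ab}\to U^{ab}\otimes\Z_p$ is injective. (If every such map killed $c^{p^m}$ for growing $m$, the intersection would force $c\in[U,U]$ for all $U$, i.e. $c$ lies in $\bigcap_U [U,U]\cap C$. The aim is to contradict this using that $C$ itself is an intersection of open subgroups $U$ with $C$ mapping nontrivially to $U/[U,U]U^{p}$.) Then $U^{ab}$ is a torsion-free abelian profinite group by hypothesis, so its pro-$p$ part $A_p=U^{ab}_{(p)}$ is a torsion-free abelian pro-$p$ group, hence free abelian pro-$p$, i.e. $\Z_p$-free on some profinite basis. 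The image $\bar C$ of $C$ is a copy of $\Z_p$ in $A_p$; shrinking $U$ further (replacing $U$ by an open subgroup containing $C$ where $\bar c$ is not divisible by $p$ in $A_p$) we arrange that $\bar C$ is pure, hence a direct summand: $A_p=\bar C\oplus B$.

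Then $r$ is the composite $U\to U^{ab}\to A_p\to\bar C\cong C$, the last map being the inverse of the isomorphism $C\to\bar C$. It is continuous, and restricted to $C$ it is the identity by construction, so it is a retraction.

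The main obstacle is the purity/injectivity step: ensuring that for some open $U\supseteq C$ the generator $c$ maps to a non-$p$-divisible element of $U^{ab}_{(p)}$. I would handle it by taking $U$ with $c\notin U_1$ where $U_1=\Phi_p(U)=\overline{[U,U]U^p}$ can be arranged by choosing a chain of open subgroups $U$ with $\bigcap U=C$ and using that the image of $C$ in $U/\Phi_p(U)$ is nontrivial for $U$ small (e.g. take $U=C\cdot V$ with $V$ open normal, $c\notin V\,C^p$, so $U/V$ is cyclic of order $p^k$ generated by $c$, giving a map $U\to C/C^{p^k}$ with $c$ not in the kernel of the mod-$p$ reduction). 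Non-divisibility of $\bar c$ by $p$ then follows, and torsion-freeness of $U^{ab}$ gives injectivity on $C$.
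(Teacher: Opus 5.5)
Your proof is correct and follows the paper's route: send $C$ into $U^{ab}$ for a suitable open $U\supseteq C$, use torsion-freeness of $U^{ab}$ to see that the image is $\Z_p$, retract onto it inside the abelian group, and pull the retraction back along $U\to U^{ab}$. Your choice $U=CV$ with $c\notin V$ gives $c\notin\Phi_p(U)$, so $\bar C$ is a direct summand of the $p$-part of $U^{ab}$; this makes explicit the abelian VRC step that the paper invokes without proof, and you need no further shrinking of $U$. Your compactness parenthetical and the loose torsion-freeness remark are unnecessary, since for finite $C$ with $C\cap V=1$ the quotient map $CV\to CV/V\cong C$ is already a retraction.
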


\begin{proof}
Let $P \simeq \Z_p$. Since $P$ is the intersection of open subgroup $U$ of $G$ containing it, the image $P_U$ of $P$ in the abelianization $U^{ab}$ of $U$ is non-trivial, for some $U$, and since $U^{ab}$ is torsion-free this image is isomorphic to $\Z_p$. Since $U^{ab}$ is VRC, we may assume, replacing $U$ by its open subgroup, that we have a retraction $r_U: U^{ab} \to P_U$ whose restriction to $P$ is injective. Defining $r: U \to P$ as $r = (\varphi|_P)^{-1}r_U\varphi$, where $\varphi: U \to U^{ab}$ is the natural epimorphism, we get the needed retraction.
\end{proof}

The fact that right-angled Artin/Coxeter groups virtually are (VRC) has already been observed by Aschenbrenner, Friedl and Wilton (see \cite{AFW15}), based on an earlier result of Agol (see \cite{A}). The virtually was droped in \cite[Corollary 1.6]{M}. The next proposition is the profinite version of it: 

\begin{prop}\label{prop:virtualretractions}
The profinite completion of a right-angled Artin group is VRC. 
\end{prop}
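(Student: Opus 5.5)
Let $A$ be a right-angled Artin group and $\widehat{A}$ its profinite completion. We must show that every procyclic pro-$p$ subgroup $P\le \widehat{A}$ is a virtual retract, i.e. there is an open $U\le\widehat{A}$ containing $P$ and a continuous retraction $U\to P$. The plan is to reduce to the abelian, torsion-free situation handled in Lemma \ref{lem:absolutely torsion free}. We use two facts about $A$. First, $A$ is residually torsion-free nilpotent, and its finite index subgroups have torsion-free abelianization up to passing to further finite index subgroups. Second, by \cite[Corollary 1.6]{M}, every cyclic subgroup of $A$ is a virtual retract. We may assume $P\simeq\Z_p$, since a finite procyclic $p$-subgroup of $\widehat{A}$ is trivial: $\widehat{A}$ is torsion-free, being the completion of a virtually special group that is good in the sense of Serre.

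The key step is to approximate $P$ by abstract cyclic subgroups. Choose a generator $x$ of $P$. Given an open normal subgroup $N$ of $\widehat A$, pick $a\in A$ with $a\in xN$. Put $V=\langle a\rangle N$, so $P\le V$. By \cite[Corollary 1.6]{M} there is a finite index subgroup $B\le A$ containing $a$ and a retraction $\rho:B\to\langle a\rangle\simeq\Z$. Its completion gives an open subgroup $\widehat B$ of $\widehat A$ and a retraction $\widehat\rho:\widehat B\to\widehat{\langle a\rangle}\simeq\widehat{\Z}$. Composing with the projection $\widehat\Z\to\Z_p$ yields an epimorphism $r:\widehat B\to\Z_p$.

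What remains is to arrange that $P\le \widehat B$ and that $r|_P$ is an isomorphism. Then $(r|_P)^{-1}\circ r$ is the required retraction, exactly as in the proof of Lemma \ref{lem:absolutely torsion free}. For this we would refine $N$ so that $N\le\widehat B$ and $N\cap \widehat{\langle a\rangle}\le\overline{\langle a^{p^m}\rangle}$. Then $x\in aN\subseteq\widehat B$, so $P=\overline{\langle x\rangle}\le \widehat B$. Moreover $r(x)\equiv r(a)$ modulo $p^m\Z_p$, so $r(x)$ generates $\Z_p$ and $r|_P$ is onto $\Z_p$, hence an isomorphism.

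\textbf{Main obstacle.} The catch is that $B$, and hence $\widehat B$, depends on $a$, and $a$ depends on $N$. So we cannot simply refine $N$ after choosing $B$. To break this circularity we will use a uniform choice. By Agol and Minasyan's argument, a finite index subgroup of $A$ (independent of $a$) retracts onto a special subgroup through the canal/Haglund–Wise completion. Alternatively we argue abelian-wise: pass to an open $U$ for which the image of $P$ in $U^{ab}$ is nontrivial and torsion-free. Such $U$ exists because finite index subgroups of $A$ are special, so their completions have torsion-free abelianization virtually. We then retract in $U^{ab}\simeq\widehat{\Z}^n$ as in Lemma \ref{lem:absolutely torsion free}.

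We expect the hard part to be precisely showing that $P$ survives with torsion-free image in some virtual abelianization. We would try to deduce it from the $p$-adic/pro-$p$ version: the pro-$p$ completion of $A$ is absolutely torsion-free by \cite{SZ}-type arguments, together with separability of cyclic subgroups in $A$.
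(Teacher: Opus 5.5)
There is a genuine gap. The step you call the ``main obstacle'' is not an intermediate lemma; it is the proposition restated. Because $\Z_p$ is abelian, $P$ is a virtual retract of $\widehat A$ if and only if $P$ has infinite image in $U^{ab}$ for some open $U\supseteq P$. One direction holds because a retraction $U\to P$ factors through $U^{ab}$; the other is the argument of Lemma \ref{lem:absolutely torsion free}. So showing that $P$ survives in some virtual abelianization is exactly what has to be proved, and none of your routes does it.

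\begin{itemize}
\item The approximation $a\in xN$ stalls at the circularity you identify. Minasyan's subgroup $B$ depends on $a$, and its index is not controlled as $N$ shrinks, so nothing forces $x\in\widehat B$ or makes $r|_P$ injective.
\item The appeal to Agol--Minasyan and Haglund--Wise does not break this circularity. Special subgroups are already retracts of $A$ itself, but $\langle a\rangle$ is in general not special. Any further retraction onto $\langle a\rangle$ is again built from $a$.
\item The abelian route only addresses torsion in $U^{ab}$, and your claim that finite-index subgroups of $A$ virtually have torsion-free abelianization is unsupported. The real issue, even in an absolutely torsion-free group, is to find an open $U$ with $P\not\le\overline{[U,U]}$.
\item The pro-$p$ fallback also fails. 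The paper recalls that \cite{SZ} \emph{characterizes} the RAAGs whose pro-$p$ completion is absolutely torsion-free, so this property is not available for arbitrary $A$. Moreover, $P$ may map trivially to the pro-$p$ completion of $A$. Passing to finite-index subgroups to rescue it leaves the class of RAAGs.
\end{itemize}

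What you are missing is the geometric input the paper uses. A RAAG splits over special subgroups; for instance, $A_\Gamma$ is an HNN extension of $A_{\Gamma\setminus v}$ along $A_{\mathrm{lk}(v)}$. Iterating gives a hierarchy whose vertex groups are retracts, and this passes to $\widehat A$ acting on the profinite Bass--Serre tree $T$.

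\begin{itemize}
\item If $P$ fixes a vertex, it lies in a conjugate of the completion of a smaller RAAG, which is a retract of $\widehat A$. Induction on the hierarchy and composition of retractions then finish.
\item If $P$ fixes no vertex, it acts freely on $T$. Indeed, a nontrivial closed subgroup of $\Z_p$ is open in $P$, and a finite group acting on a profinite tree fixes a vertex.
\end{itemize}

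In the free case, for every open $U\supseteq P$ the group $U/\widetilde U\cong\pi_1(U\backslash T)$ is free profinite, hence torsion-free. So the image of $P$ in it is either trivial or isomorphic to $\Z_p$. It cannot be trivial for every such $U$, since then $P\le\bigcap_U\widetilde U=1$ (\cite[Lemma 4.6]{LZ24}). Thus $P$ embeds in a free profinite group, which is absolutely torsion-free. Lemma \ref{lem:absolutely torsion free} gives a virtual retraction there, and pulling it back to the corresponding open subgroup of $U$, as in your final step, gives the required retraction. This is where the nonvanishing of $P$ in a virtual abelianization actually comes from.
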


\begin{proof}
Note that a right-angled Artin group $G$ have a hierarchy where vertex groups are retracts (see \cite[Lemma 3.4]{CRPZ25}). 

Consider the action of $\widehat{G}$ on its Bass--Serre tree $T$. First, if $P \simeq \Z_p$ does not fix a vertex, then $P$ acts freely on $T$. Since $P$ is the intersection of open subgroup $U$ of $\widehat{G}$ containing it, by \cite[Lemma 4.6]{LZ24} $\bigcap_U \widetilde U = 1$ and so we can choose an open $U$ containing $P$ such that $U \to U/\widetilde{U}$ is injective on $P$. Note that $U/\widetilde U = \pi_1(U \backslash T)$. Thus the image $\bar{P}$ of $P$ in $U/\widetilde{U}$ is isomorphic to $\Z_p$. Now by Lemma \ref{lem:absolutely torsion free}, $\bar{P}$ is a virtual retract of $U/\widetilde{U}$, so after replacing $U$ by a possibly smaller open subgroup, we obtain an epimorphism $U/\widetilde{U} \to \Z_p$ that restricts injectively to $\bar{P}$. Taking the composition, we obtain an epimorphism $\varphi: U \to \Z_p$ whose restriction to $P$ is injective. Set $C = \varphi(P)$ and $U_0 = \varphi^{-1}(C)$ which is open in $U$ and contains $P$. Defining $r: U_0 \to P$ as $r = (\varphi|_P)^{-1}\varphi|_{U_0}$, we get a retraction.

Let $\Z_p$ be contained in $\widehat{G_v}$ for some $v$. We will prove the statement by induction on the length of the  hierarchy of $\widehat{G}$. The terminal vertex groups are cyclic groups and since $\widehat{G_v}$ is a virtual retract. Composing both retractions, we finish the base case. In the general, if $\Z_p$ does not fix a vertex, by the previous result it is a virtual retract of $\widehat{G_v}$ and $\widehat{G_v}$ is a virtual retract of $\widehat{G}$; if $\Z_p$ fixes a vertex, then by induction on $\widehat{G_v}$, $\Z_p$ is a virtual retract of $\widehat{G_v}$ and $\widehat{G_v}$ is a virtual retract of $\widehat{G}$. This finishes the proof.
\end{proof}

\begin{coro}\label{cor:virtually special}
The profinite completion of a virtually compact special group is virtually VRC.
\end{coro}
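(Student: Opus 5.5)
The plan is to reduce to Proposition \ref{prop:virtualretractions} via the definition of compact special groups and the separability properties of special cube complexes. Let $G$ be virtually compact special. Then $G$ has a finite index subgroup $G_0$ which is compact special. By the Haglund--Wise theory, $G_0$ embeds into a finitely generated right-angled Artin group $A$. The embedding is moreover a virtual retract: for compact special groups, $G_0$ is a virtual retract of $A$, since the local isometry of cube complexes can be completed to a finite cover that retracts onto the image (Haglund--Wise canonical completion and retraction). So there is a finite index subgroup $A_1 \leq A$ containing $G_0$ together with a retraction $\rho: A_1 \to G_0$.

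The second step passes this to profinite completions. Since $A$ induces the full profinite topology on its finite index subgroups, $\widehat{A_1}$ is open in $\widehat{A}$. A retraction $A_1 \to G_0$ forces $G_0$ to carry the full profinite topology induced from $A_1$, because every finite index subgroup $V$ of $G_0$ equals $G_0 \cap \rho^{-1}(V)$. Hence the closure of $G_0$ in $\widehat{A_1}$ is $\widehat{G_0}$, and $\rho$ extends to a continuous retraction $\hat\rho: \widehat{A_1} \to \widehat{G_0}$. By Proposition \ref{prop:virtualretractions}, $\widehat{A}$ is VRC, and by the remark after the definition, VRC passes to closed subgroups. In fact we do not even need the retraction $\hat\rho$ for this step: $\widehat{G_0}$ is a closed subgroup of $\widehat{A}$, so it is VRC directly. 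Explicitly, if $P\simeq\Z_p\leq \widehat{G_0}$, the retraction $U\to P$ from an open $U\leq\widehat A$ restricts to a retraction $U\cap\widehat{G_0}\to P$, and $U\cap \widehat{G_0}$ is open in $\widehat{G_0}$.

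Finally, $\widehat{G_0}$ is open in $\widehat{G}$ because $G_0$ has finite index in $G$ and finitely generated residually finite groups induce the full profinite topology on finite index subgroups. Therefore $\widehat{G}$ is virtually VRC.

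The main obstacle is the identification of $\widehat{G_0}$ with the closure of $G_0$ in $\widehat{A}$, that is, the injectivity of $\widehat{G_0}\to\widehat{A}$. This is exactly where the virtual retraction of Haglund--Wise is needed; separability of $G_0$ alone would not suffice. I would cite it as a standard fact for compact special groups rather than reprove it.
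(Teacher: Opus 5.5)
Your proposal is correct and follows the paper's proof: pass to a finite-index compact special subgroup $G_0$, use the Haglund--Wise virtual retraction of $G_0$ inside a right-angled Artin group $A$ to get $\widehat{G_0}\hookrightarrow\widehat{A}$, and conclude from Proposition \ref{prop:virtualretractions} since VRC passes to closed subgroups. You also spell out two points the paper leaves implicit: why the virtual retraction makes $\widehat{G_0}\to\widehat{A}$ injective, and why $\widehat{G_0}$ is open in $\widehat{G}$.
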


\begin{proof}
A virtually compact special group contains a finite index subgroup $G_0$ that embeds into right-angled Artin group as virtual retract and so $\widehat{G_0}$ embeds into the profinite completion of a right-angled Artin group (see \cite[Proposition 3.2, Proposition 3.3]{HW10}). Since the property we want to prove passes to subgroups, we deduce the result form Proposition \ref{prop:virtualretractions}.
\end{proof}

Combining Theorem \ref{theo:pro-pi-stabilizers-profinite-version} and Corollary \ref{cor:virtually special} we prove Theorem \ref{theo:virtually compact special case}.
\\

Consider the following list of groups:
\begin{enumerate}
\item[(i)] limit groups. In particular $\pi_1(S_g)$, where $S_g$ is a closed orientable surface of genus $g \geq 2$,

\item[(ii)] fundamental group of hyperbolic $3$-manifolds,

\item[(iii)] finitely generated Fuchsian non-triangle group,

\item[(iv)] word-hyperbolic graph braid groups,

\item[(v)] one-relator groups with torsion,

\item[(vi)] finitely generated Kleinian groups.

\item[(vii)] Hyperbolic hyperbolic-by-cyclic groups, i.e., a semidirect product $G \rtimes\Z$ of a hyperbolic group $G$ with infinite cyclic group,

\item[(viii)] the fundamental group of standard arithmetic hyperbolic manifolds.
\end{enumerate}

\begin{itemize}
\item Note that all these groups are virtually compact special (see \cite{Dan21}, \cite{AFW15}, \cite{CW04, Gen21}, \cite{EEK82}, \cite{MZ13}, \cite{DMM25}, \cite{BFW11, PB24}). The groups in (iv) are, in fact, compact special.

\item Note that (i), (ii) and (iii) have acylindrical actions on their respective Bass--Serre trees (see \cite{ZZ19}, \cite{WZ17}, \cite{LZ24}).

\item Note that the groups (ii), (iii), (v) are word-hyperbolic (see \cite{BCR16}, \cite{MZ13}). The groups in (i) and (vi) are relatively hyperbolic (see \cite{AB06}, \cite{Bow12}). By \v{S}varc-Milnor lemma, the groups in (viii) are word-hyperbolic if the manifold is compact since it acts (by isometries) properly and cocompactly in $\mathbb{H}^n$ and in the non-compact case it is relatively hyperbolic (see \cite[Example 1]{Szc98}).
\end{itemize}

Consider the groups in (i), (ii) and (iii). Since they are virtually compact special, by Corollary \ref{cor:virtually special}, the profinite completions of all these groups are VRC. Then Corollary \ref{coro:application} follows from Theorem \ref{theo:pro-pi-stabilizers-profinite-version}.

Now, the groups (iv), (v), (vi), (vii) and (viii) have a finite-index subgroup $G_0$ having a malnormal quasiconvex hierarchy and $\widehat{G_0}$ acts on its Bass--Serre tree. Then Corollary \ref{coro:application2} follows from Theorem \ref{theo:virtually compact special case}. 

We finish the paper with the following:

\begin{conj}
Let $G$ be a torsion free hyperbolic virtually compact special group and $H$ a finitely generated pro-$\calC$ subgroup of $\widehat{G}$, where $\calC$ does not contain all finite groups. Then $H$ is projective.
\end{conj}

This conjecture is true if $H$ is prosoluble (see \cite[Theorem B]{LZ24}).

\end{document}